\documentclass{emsprocart}


\contact[olivier.guichard@math.unistra.fr]{Olivier Guichard, Institut de Recherche Math\'ematique Avanc\'ee, UMR 7501\\
 Universit\'e de Strasbourg et CNRS\\
7 rue Ren\'e-Descartes, 67000 Strasbourg, France}

\contact[wienhard@uni-heidelberg.de, ]{Anna Wienhard, Ruprecht-Karls Universit\"at Heidelberg, Mathematisches Institut, Im Neuenheimer Feld~205, 69120 Heidelberg, Germany
\newline HITS gGmbH, Heidelberg Institute for Theoretical Studies, Schloss-Wolfsbrun\-nenweg 35, 69118 Heidelberg, Germany}





\newtheorem{theorem}{Theorem}[section]
\newtheorem{corollary}[theorem]{Corollary}

\newtheorem{conjecture}[theorem]{Conjecture}


\theoremstyle{definition}
\newtheorem{definition}[theorem]{Definition}
\newtheorem{remark}[theorem]{Remark}
\newtheorem{examples}[theorem]{Examples}

\newcommand{\RR}{\mathbb{R}}
\newcommand{\CC}{\mathbb{C}}
\newcommand{\HH}{\mathbb{H}}
\newcommand{\OO}{\mathbb{O}}
\newcommand{\ZZ}{\mathbb{Z}}

\newcommand{\PP}{\mathbb{P}}
\newcommand{\SL}{\mathrm {SL}}
\newcommand{\GL}{\mathrm {GL}}
\newcommand{\Sp}{\mathrm {Sp}}
\newcommand{\SO}{\mathrm {SO}}

\title[Positivity and higher Teichm\"uller theory]{Positivity and higher Teichm\"uller theory}

\author[Olivier Guichard, Anna Wienhard]{Olivier Guichard, Anna Wienhard \thanks{The second author acknowledges support by the National Science Foundation under agreement DMS-1536017, by the Sloan Foundation, by the Deutsche Forschungsgemeinschaft, by the European Research Council under ERC-Consolidator grant 614733, and by the Klaus-Tschira-Foundation.}}

\begin{document}

\begin{abstract}
We introduce $\Theta$-positivity, a new notion of positivity in real semisimple Lie groups. The notion of $\Theta$-positivity generalizes 
at the same time Lusztig's total positivity in split real Lie groups 
as well as well known concepts of positivity in Lie groups of Hermitian type. We show that there are two other families of Lie groups, $\SO(p,q)$ for $p\neq q$ 
and a family of exceptional Lie groups, which admit a $\Theta$-positive structure. We describe key aspects of $\Theta$-positivity and make a connection with 
representations of surface groups and higher Teichm\"uller theory. 
\end{abstract}

\begin{classification}
Primary 11-XX; Secondary 14-XX.
\end{classification}

\begin{keywords}
Total positivity, surface group representations, discrete subgroups of Lie groups, higher Teichm\"uller theory
\end{keywords}

\maketitle
\section{Introduction}

A totally positive matrix in $\GL(n,\RR)$ is a matrix all of whose minors are positive. Totally positive matrices arose first in work of Schoenberg \cite{Schoenberg} and Gantmacher and Krein \cite{Gantmacher_Krein}, and have since become very important in a wide array of mathematical fields, ranging from stochastic processes to representation theory. 

In 1994 Lusztig \cite{Lusztig} generalized  total positivity to the context of general split real semisimple reductive Lie groups. It plays an important role in representation theory with many interesting relations to other areas in mathematics as well as to problems in theoretical physics \cite{Fomin_ICM, Ando, Karlin}. A more algebro-geometric approach to Lusztig's total positivity has been developed by Fock and Goncharov \cite{Fock_Goncharov}, and applied in the context of higher Teichm\"uller theory. 

In this article we describe a generalization of Lusztig's total positivity, which we call $\Theta$-positivity, and which is defined for other semisimple (resp. reductive) Lie groups which are not necessarily split. 
As a particular example $\Theta$-positivity includes the classical notion of positivity given by certain Lie semigroups in Lie groups of Hermitian type, which are related to bi-invariant orders and causality \cite{BenSimon_Hartnick, Hilgert_Neeb}. The notion of positivity in Lie groups of Hermitian type also played a role in recent developments of higher Teichm\"uller theory, notably in the theory of maximal representations, see \cite{Burger_Iozzi_Wienhard_toledo, BBHIW1, BBHIW2}. 

We classify semisimple Lie groups admitting a $\Theta$-positive structure, and show that besides split real Lie groups and Lie groups of Hermitian type, there are exactly two other families, namely groups locally isomorphic to $\SO(p,q)$, $p\neq q$, and the exceptional family of real forms of $F_4, E_6, E_7, E_8$, whose restricted root system is of type $F_4$. 
We describe several structure results for $\Theta$-positivity. We propose the notion of $\Theta$-positive representations of surface groups and conjecture that the spaces of $\Theta$-positive representations into the two new families of Lie groups admitting a $\Theta$-positive structure give new examples of higher Teichm\"uller spaces. 

A more detailed account to $\Theta$-positivity including all the proofs will appear in \cite{Guichard_Wienhard_pos}. The conjectures on $\Theta$-positive representations will be addressed in \cite{Guichard_Labourie_Wienhard}.

%
%
%
%
%
%
%
%
%
%
\section{Positivity in Lie groups}
In this section we shortly review several notions of positivity in Lie groups. 

\subsection{The positive reals and the order on the circle}\label{sec:sl2}
 Our starting point is the subset  $\RR_+\subset \RR$ of positive real numbers. 
Considering $\RR$ as a group, the subset $\RR_+$ is a sub-semigroup. Considering $\RR$ as a vector space, the subset $\RR_+$ is an open strict convex cone. 
Both viewpoints are important for more general notions of positivity. 

The cone $\RR_+$ is closely linked with the orientation on the circle $\RR\PP^1$. The tangent space of $\RR\PP^1$ to a point $x\in\RR\PP^1$ naturally identifies with $\RR$, and thus at every point $x\in \RR\PP^1$ the cone $\RR_+ \subset \RR \cong T_x\RR\PP^1$ provides a causal structure on $\RR\PP^1$. 

A triple of points $(x,y,z)$ on $\RR\PP^1$ is positively oriented if the points are pairwise distinct and read in this order going along the circle following its orientation. Positively oriented triples  in $\RR\PP^1$ can be described using the cone $\RR_+ \subset \RR$.
For simplicity we assume $ x = \RR e_2$ and $z = \RR e_1$, where $e_1, e_2$ are the standard basis vectors of $\RR^2$. The group $\mathrm{SL}(2,\RR)$ acts transitively on $\RR\PP^1$.  The subgroup 
$$
U =\Big\{ g \in \SL(2,\RR)\, |\, g = \begin{pmatrix} 1& t\\ 0& 1 \end{pmatrix}\Big\}
$$
fixes $z$ and acts transitively on $\RR\PP^1\backslash\{z\}$. 
The group $U$ is a one dimensional Abelian group and can be identified with $\RR$. An explicit identification is given by the map 
$$
\RR\longrightarrow U, \, t \mapsto \begin{pmatrix} 1& t\\ 0& 1 \end{pmatrix}
$$
The cone $\RR_+$ thus defines a subsemigroup $U_+ \subset U$, given by 
$$U_+ =\Big\{\begin{pmatrix} 1& t\\ 0& 1 \end{pmatrix} \in U |\, t>0\Big\}.$$

Any point $y\in \RR\PP^1 \backslash \{z\}$ can be written in a unique way as $u_y \cdot x$: 
If $y$ is the line spanned by a vector $t_y e_1 + e_2$, then $y = u_y \cdot x$ with $u_y = \begin{pmatrix} 1& t_y\\0& 1 \end{pmatrix}$. 
The triple $(x,y,z)$ is positively oriented if and only if $t_y>0$, i.e.\ if $u_y \in U_+$. 

We can go a step further and use the subsemigroup $U_+$ to define a subsemigroup of $\mathrm{SL}(2,\RR)$. 
For this we consider also the group 

$$
O=\Big\{ g \in \SL(2,\RR)\, |\, g = \begin{pmatrix} 1& 0\\ t& 1 \end{pmatrix}\Big\}
$$
and the subsemigroup 
$$
O_+ =\Big \{  \begin{pmatrix} 1& 0\\ t& 1 \end{pmatrix} \in O\, |\, t> 0\Big\}. 
$$
Let 
$$
A =\Big \{ g \in \SL(2,\RR)\, |\, g = \begin{pmatrix} \lambda& 0\\ 0& \lambda^{-1}\end{pmatrix}\Big\}, 
$$
be the subgroup of diagonal matrices, and 
$$
A^\circ = \Big\{ \begin{pmatrix} \lambda& 0\\ 0& \lambda^{-1}\end{pmatrix}\in A \, |\, \lambda>0\Big\} 
$$ 
which is the connected component of the identity in $A$. 
%
%
%

We define the subset $\SL(2,\RR)^{>0} \subset \SL(2,\RR)$ by 
$$
\SL(2,\RR)^{>0} =O_+ A^\circ U_+. 
$$
One can easily check that $\SL(2,\RR)^{>0}$ is the set of matrices all of whose entries are positive. From this it is immediate that $\SL(2,\RR)^{>0}$ is a subsemigroup of $\SL(2,\RR)$. 
The fact that $\SL(2,\RR)^{>0}$ is a subsemigroup can also be proved directly showing that the product of two elements in $O_+ A^\circ U_+$ is again in $O_+ A^\circ U_+$. 
The main point in this computation is to show that the product of an element of $U_+$ with and element of $O_+$ is again in $O_+ A^\circ U_+$, which can be checked explicitly:
$$
\begin{pmatrix} 1& s\\ 0& 1 \end{pmatrix} \begin{pmatrix} 1& 0\\ t& 1 \end{pmatrix}  = \begin{pmatrix} 1& 0\\ \frac{t}{1+st}& 1 \end{pmatrix} \begin{pmatrix} {1+st}& 0\\ 0& (1+st)^{-1} \end{pmatrix} 
\begin{pmatrix} 1& \frac{s}{1+st}\\ 0& 1 \end{pmatrix}.
$$
\subsection{Total positivity}
An $n\times n$-matrix is said to be totally positive if all of its minors are positive (i.e.\ in $\RR_+$). The set of all totally positive $n\times n$-matrices forms a subset 
$\mathrm{GL}(n,\RR)^{>0} \subset \mathrm{GL}(n,\RR)$. 
Totally positive matrices have very intriguing properties and have  many applications in various areas of mathematics ranging from statistics to representation theory, see for example \cite{Ando} for a survey. 

Totally positive matrices satisfy a decomposition theorem, and form in fact a semigroup. 
%
%
To describe this, let 
$U$ be the group of upper triangular matrices with ones on the diagonal, 
$O$ the group of lower triangular matrices with ones on the diagonal, and 
$A$ the group of diagonal matrices. 

Define $U^{>0} \subset U$, and $O^{>0}\subset O$ to be the subsets of totally positive unipotent matrices, i.e.\ those matrices of $U$, where all minors are positive, except those which have to be zero by the condition of being an element of $U$, similarly for $O$. 
Then $\mathrm{GL}(n,\RR)^{>0} $ satisfies the following decomposition theorem, due to A. Whitney \cite{Whitney}, see also \cite{Loewner}: 
$$\mathrm{GL}(n,\RR)^{>0} = O^{>0} A^\circ U^{>0}, $$
where $A^\circ$ is the connected component of the identity in $A$, i.e.\ diagonal matrices all of whose entries are positive.

The subsets $U^{>0}$ and $O^{>0}$ can be parametrized very explicitly.  The group $U$ is generated by elementary matrices 
$$
u_i(t) = I_n + t E_{i,i+1}, \, i = 1, \cdots, n-1, 
$$
where $I_n$ denotes the identity matrix and $E_{i, i+1}$ the matrix with the single entry $1$ in the $i$-th row and $(i+1)$-th column. 

The non-negative subsemigroup $U^{\geq 0}\subset U$ is the semigroup generated by all $u_i(t)$, $ i = 1, \cdots, n-1$ with $t \in \RR_+$. 
In the case when $n= 2$ this is already the  subsemigroup we are looking for, but for $n\geq 3$ the situation is a bit more complicated. 
In fact, the elements $u_i(t)  \in U^{\geq 0}$ are not contained in $U^{>0}$, since they have many minors which are zero, but should not be. 

In order to parametrize the set $U^{>0}$ we use the symmetric group $\mathcal{S}_{n}$ on $n$ letters.  We denote by $\sigma_i$, $ i = 1, \cdots n-1$,  the transposition $(i,i+1)$ and by $\omega_0$ be the longest element of the symmetric group, which sends $(1,2,\cdots , n-1, n)$ to  $(n, n-1, \cdots, 2, 1)$. For every way to write $\omega_0 =\sigma_{i_1} \sigma_{i_2} \cdots \sigma_{i_k}$, $k = \frac{n(n-1)}{2}$, as a reduced product of transpositions $\sigma_i$, $ i = 1, \cdots,  n-1$,
we define the map 
$$
F_{\sigma_{i_1} \sigma_{i_2} \cdots \sigma_{i_k}}: \RR^k \longrightarrow U, 
\, (t_1, \cdots t_k) \mapsto u_{i_1}(t_1)u_{i_2}(t_2) \cdots u_{i_k}(t_k). 
$$
An element is in $U^{>0}$ if and only if it is of the form $u_{i_1}(t_1)u_{i_2}(t_2) \cdots u_{i_k}(t_k)$ with $t_i \in \RR_+$ for all $ i = 1, \cdots, k$. 
The map $F_{\sigma_{i_1} \sigma_{i_2} \cdots \sigma_{i_k}}|_{(\RR_+)^k}$ is a bijection onto $U^{>0}$ and provides a parametrization of $U^{>0}$ by $(\RR_+)^k$. 
 
 There are many different ways to write $\omega_0$ as a reduced product of transpositions, and for two different reduced expression, the change of coordinates is given by a positive rational map. 
 
We illustrate this in the case when $n=3$. Here the longest element $\omega_0$ has two reduced expression 
$\sigma_1 \sigma_2 \sigma_1 = \omega_0= \sigma_2\sigma_1\sigma_2$. 
To compute the change of coordinates we consider  $F_{\sigma_1 \sigma_2 \sigma_1} (a,b,c) = u_{1}(a) u_2(b) u_1(c)$ and 
$F_{\sigma_2 \sigma_1 \sigma_2} (c',b',a') = u_{2}(c') u_1(b') u_2(a')$. 

$$
u_{1}(a) u_2(b) u_1(c)=  \begin{pmatrix} 1&a&0\\0&1&0\\0&0&1\end{pmatrix} \begin{pmatrix} 1&0&0\\0&1&b\\0&0&1\end{pmatrix}\begin{pmatrix} 1&c&0\\0&1&0\\0&0&1\end{pmatrix} = \begin{pmatrix} 1&a+c&ab\\0&1&b\\0&0&1\end{pmatrix},  
$$
and it is easy to see that $u_{1}(a) u_2(b) u_1(c) \in U^{>0}$ if and only if $a,b,c \in \RR_+$. 

$$
u_{2}(c') u_1(b') u_2(a')=  \begin{pmatrix} 1&0&0\\0&1&c'\\0&0&1\end{pmatrix} \begin{pmatrix} 1&b'&0\\0&1&0\\0&0&1\end{pmatrix}\begin{pmatrix} 1&0&0\\0&1&a'\\0&0&1\end{pmatrix} = \begin{pmatrix} 1&b'&b'a'\\0&1&c'+a'\\0&0&1\end{pmatrix}. 
$$
Comparing the matrix entries we get explicit transition maps $c' = \frac{bc}{a+c}$, $b' = a+c$, and $a' =\frac{ab}{a+c}$. 
If $a,b,c \in \RR_+$ then these maps are well defined, and $a',b',c' \in \RR_+$ as well. 


This explicit parametrization of $U^{>0}$ in fact follows Lusztig's approach, who generalized total positivity to arbitrary {\em split} real reductive Lie group $G$ \cite{Lusztig}. 
The reader who is familiar with the structure of reductive or semisimple Lie groups and their Lie algebras will see immediately that the one parameter subgroups $u_i(t)$  correspond to one parameter subgroups obtained when exponentiating the simple root spaces of the Lie algebra. And the role of the symmetric group $\mathcal{S}_n$ is in general played by the Weyl group of $G$. 
The proof that for a general split real reductive Lie group the changes of coordinates are given by postive rational maps reduces essentially to the above computation for the case when $n=3$.

%
%
%

\subsection{Convex cones and semigroups}
The subsemigroup $\SL(2,\RR)^{>0}\subset \SL(2,\RR)$ has been generalized in a different direction for semisimple Lie groups of Hermitian type, in particular for those of tube type as for example $\mathrm{Sp} (2n,\RR)$, $\mathrm{SU}(n,n)$, or $\mathrm{SO}(2,n)$. 
For this generalization we think of $\RR_+ \subset \RR$ as a strict convex cone with non-empty interior in the vector space $\RR$, which is homogeneous and invariant under the action of $\GL(1,\RR)$, defined by 
$$
\GL(1,\RR) \times \RR_+ \longrightarrow \RR_+ , \, (\lambda, v) \mapsto \lambda^2 v. 
$$

A Hermitian symmetric space $X = G/K$ is said to be of tube type if it is biholomorphically equivalent to a tube domain $T_\Omega = V + i \Omega$, where $V$ is a real vector space and $\Omega\subset V$ is an sharp convex cone. When $G = \SL(2,\RR)$ this tube domain is just the upper half space $\mathcal{H} = \RR + i\RR_+$. 
When $G = \mathrm{Sp}(2n,\RR)$ the tube domain is the Siegel upper half-space $\mathcal{H}_n =  \mathrm{Sym}(n,\RR) + i \mathrm{Pos}(n,\RR)$, where   $\mathrm{Pos}(n,\RR) \subset \mathrm{Sym}(n,\RR)$ is the subset of positive definite symmetric matrices. 

%
We focus on the example of  $G = \mathrm{Sp}(2n,\RR)$ and describe how the cone $\mathrm{Pos}(n,\RR) \subset \mathrm{Sym}(n,\RR)$ gives rise to a semigroup $\mathrm{Sp}(2n,\RR)^{\succ 0} \subset \mathrm{Sp}(2n,\RR)$. The construction of the semigroup $G^{\succ 0} \subset G$ for a general Lie group of Hermitian type and of tube type is analogous. 
For this we set
$$V = \Big\{g \in \mathrm{Sp}(2n,\RR) \, |\, g =  \begin{pmatrix} Id_n& 0\\ M& Id_n \end{pmatrix} \, ,\, M \in \mathrm{Sym}(n,\RR)\Big\},$$
$$W =\Big \{g \in \mathrm{Sp}(2n,\RR) \, |\, g =  \begin{pmatrix} Id_n& N\\ 0& Id_n \end{pmatrix} \,, \, N \in \mathrm{Sym}(n,\RR)\Big\},$$
and 
$$H = \Big\{g \in \mathrm{Sp}(2n,\RR) \, |\, g =  \begin{pmatrix} A & 0\\ 0& {A^t}^{-1} \end{pmatrix}\Big\} \cong \GL(n,\RR),$$
where the matrices are written with respect to a symplectic basis. 

The subsemigroup 
$\mathrm{Sp}(2n,\RR)^{\succ 0}$ is defined by  
$$ 
\mathrm{Sp} (2n,\RR)^{\succ 0} = V^{\succ 0} H^\circ W^{\succ 0},
$$
where 
$$V^{\succ 0} = \{ \begin{pmatrix} Id_n& 0\\ M& Id_n \end{pmatrix} \in V \, |\, M \in \mathrm{Pos}(n,\RR)\},$$
$$W^{\succ 0} = \{ \begin{pmatrix} Id_n& N\\ 0& Id_n \end{pmatrix} \in W \,|\,  N \in \mathrm{Pos}(n,\RR)\}, $$
and $H^\circ$ is the connected component of the identity of $H$. 
The fact that $\mathrm{Sp}(2n,\RR)^{\succ 0}$ is a subsemigroup follows by a computation similar to the computation given for $\SL(2,\RR)$ at the end of Section~\ref{sec:sl2}.

Since $\Sp(2n,\RR)$ is at the same time a split real Lie group as well as a Lie group of Hermitian type of tube type, we obtain two different subsemigroups,  the semigroup 
$\mathrm{Sp}(2n,\RR)^{>0}$ defined by Lusztig, and the semigroup $\mathrm{Sp}(2n,\RR)^{\succ 0}$. 
Note that $\Sp(2n,\RR)$ is the only simple Lie group which is at the same time a split real Lie group as well as a Lie group of Hermitian type of tube type. 

\section{Triple positivity in flag varieties}
Similarly to the relation between $\RR_+$ and positively oriented triples on $\RR\PP^1$, the two notions of positivity reviewed above lead to a notion of positivity of triples in certain flag varieties.

\subsection{Positivity in the full flag variety}\label{sec:triples}
The subsemigroup of totally positive matrices is closely related to the notion of positivity of triples in the space of full flags of vector subspaces in $\RR^n$. 
Let 
$$
\mathcal{F}:= \{ F=(F_1, F_2, \cdots, F_{n-1}) \, |\, F_i \subset \RR^n, \,  \dim(F_i) = i, \, F_i \subset F_{i+1}\} 
$$
denote the full flag variety. 
Two flags $F,F'$ are said to be transverse if $ F_i \cap F'_{n-i}  = \{ 0\} $. Given $F$ we denote by $\Omega_F$ the set of all flags in $\mathcal{F}$ which are transverse to $F$. $\Omega_F$ is an open and dense subset of $\mathcal{F}$.

We fix
$F \in \mathcal{F}$ to be the flag generated by the standard basis of $\RR^n$, i.e.\  $F_i = \mathrm{span} (e_1, \cdots, e_i)$, and 
$E \in \mathcal{F}$ to be the flag generated by the standard basis of $\RR^n$ in the opposite order, i.e.\  $E_i = \mathrm{span} (e_n, \cdots, e_{n-i+1})$. 

Any flag $T$ which is transverse to $F$, is the image of $E$ under a unique element $u_T \in U$. 
The triple of flags $(E, T, F)$ is said to be {\em positive} if and only if $T  = u_T \cdot E$ for an element $u_T \in U^{>0}$. 

If $(E,T,F)$ is positive, then $T$ is automatically  transverse to $E$. In fact, Lusztig \cite{Lusztig} proved that the set $\{ T \in \mathcal{F}\,|\,  (E,T,F) \text{ is positive }\}$ is a connected component of the intersection $\Omega_E \cap \Omega_F$. This connected component, which can be identified with $U^{>0}$, carries the structure of a semigroup. 

Since $\GL(n,\RR)$ acts transitively on pairs of tranvserse flags, any two transverse flags $(F_1, F_2)$ can be mapped to $(E,F)$ by an element of $\GL(n,\RR)$ and we can extend the notion of positivity to any triple of flags. 

\begin{remark}
Note that by this extension through the $\GL(n,\RR)$ action, one might loose some of the geometric properties. For example, when $n=2$, any triple of pairwise distinct points in $\RR\PP^1$ is positive, since there is an element in $\GL(2,\RR)$ which changes the orientation of $\RR\PP^1$. If we extend the notion of positivity of triple via the $\SL(2,\RR)$ action we obtain the positively oriented triples discussed above. In general it turns out to be useful to work not only with positive triples, but with positive four-tuple or more generally postiive $n$-tuples, see Remark~\ref{rem:fourtuples}.  The set of positive triples (and more generally $n$-tuples) of flags admits a very explicit description in terms of projective invariants (triple ratios and crossratios) of flags, see \cite{Fock_Goncharov}. 
\end{remark}

In a very analogous way, Lusztig's total positivity in a split real semisimple Lie group $G$ is linked to notion of positivity of triples in the generalized flag variety $G/B$, where $B$ is the Borel subgroup of $G$.

\subsection{Positivity and the Maslov index}\label{sec:maslov}
If $G$ is a Lie group of Hermitian type, which is of tube type, the positive structure $G^{\succ 0} \subset G$ is also linked with the notion of positivity of triple in the generalized partial flag variety $G/Q$, which arises as the Shilov boundary of the Hermitian symmetric space. We illustrate this for $G = \Sp(2n,\RR)$ where the relevant partial flag variety is the space of Lagrangian subspaces. 

Let $\omega$ be the standard symplectic form on $\RR^{2n}$ and let $\{ e_1, \cdots, e_n, f_1, \cdots f_n\}$ be a symplectic basis of $\RR^{2n}$ with respect to $\omega$. 

Let 
$$
\mathcal{L} := \{L \subset \RR^{2n} \, |\, \dim{L} = n, \, \omega|_{L \times L} = 0 \}
$$
be the space of Lagrangian subspaces. 
Two Lagrangians $L$ and $L'$ are transverse if $L \cap L' = \{ 0\}$. We denote by $\Omega_L$ the set of Lagrangians transverse to $L$. 

Fix 
$L_E = \mathrm{span} (e_1, \cdots, e_n)$ and $L_F = \mathrm{span} (f_1, \cdots, f_n)$. 
Any Lagrangian $L_T \in \mathcal{L}$ transverse to $L_F$ is the image of $L_E$ under an element $v_T =\begin{pmatrix} Id_n& 0\\ M_T& Id_n \end{pmatrix}  \in V$. 

The triple of Lagrangians $(L_E, L_T, L_F)$ is said to be {\em  positive} if and only if $M_T \in \mathrm{Pos}(n,\RR) \subset \mathrm{Sym}(n,\RR)$.  

%

Again the set of $L_T \in \mathcal{L}$ such that $(L_E, L_T, L_F)$ is positive is a connected component of $\Omega_{L_E} \cap \Omega_{L_F}$.  
 
The symplectic group $ \Sp(2n,\RR)$ acts transitively on $\mathcal{L}$ and on the space of pairs of transverse Lagrangians. 
The stabilizer of the two Lagrangian subspaces $L_E$ and $L_F$  is 
$ \mathrm{Stab}_{\Sp(2n,\RR)} (L_E) \cap  \mathrm{Stab}_{\Sp(2n,\RR)}(L_F) = H \cong \GL(n,\RR) $.
 
If $h = \begin{pmatrix} A & 0\\ 0& {A^t}^{-1}\end{pmatrix}$ and  $v_T =\begin{pmatrix} Id_n& 0\\ M_T& Id_n \end{pmatrix}$ is the element with $v_T\cdot L_E = L_T$. Let $L_{T'} = h \cdot v_T \cdot L_E$, then 
$L_{T'} = v_{T'} \cdot L_E$ with $v_{T'} = \begin{pmatrix} Id_n& 0\\ h M_T h^t & Id_n \end{pmatrix}$. Since $h M_T h^t$ is positive definite if and only if $M_T$ is, we can extend notion of positivity to all triples of Lagrangians using the action of $\Sp(2n,\RR)$. 
 
The notion of positive triples in $\mathcal{L}$ is in fact closely related to the Maslov index $\mu: \mathcal{L}^3 \to \ZZ$. A triple of Lagrangians $(L_1, L_2, L_3)$ is positive if and only if the $\mu(L_1, L_2, L_3) = n$. In an analogous way, the notion of positivity of a triple of points in the Shilov boundary of a Hermitian symmetric space of tube type can be defined. Positive triples are then characterized by the generalized Maslov index \cite{Clerc_Orsted, Clerc} assuming its maximal value. 
 
 \begin{remark}
 Note that the tangent space at the point $L_E$ naturally identifies with $\mathrm{Sym}(n,\RR)$ and the convex cone $\mathrm{Pos}(n,\RR) \subset \mathrm{Sym}(n,\RR)$ defines a causal structure on $\mathcal{L}$, see \cite{Kaneyuki} for more details. 
 \end{remark}

\section{$\Theta$-positivity}
In the previous section we reviewed classical notions of positivity in semisimple Lie groups and in associated flag varieties and described them in such a way as to underline their similarities. 
In this section we will show that these notions of positivity in  split real forms and Lie groups of Hermitian type are particular cases of a more general notion of positivity, which we call $\Theta$-positivity, where $\Theta \subset \Delta$ is a subset of simple positive roots. The classification of simple Lie groups admitting a $\Theta$-positive structure includes two more families, the groups $\mathrm{SO}(p,q)$, $p\neq q$, and an exceptional family. 

In order to describe $\Theta$-positivity we will recall some facts about semisimple Lie groups $G$ and the structure of the Lie algebras of parabolic groups $P_\Theta <G$ defined by $\Theta\subset \Delta$. The definition of $\Theta$-positive structures will be given in terms of properties of these Lie algebras. However in Theorem~\ref{cor:positive} we deduce a more geometric  characterization in terms of the structure of triples of points in the flag variety $G/P_\Theta$.  We refer the reader not familiar with semisimple Lie algebras to \cite{Helgason} for more background. 
In Section~\ref{sec:so3q} we give a more elementary description in the case when $G = \SO(3,q)$, $ q> 3$, which does not require any knowledge about the structure theory of semisimple Lie groups. 

\subsection{Structure of parabolic subgroups} 
In order to set notation, let  $G$ be a real semisimple Lie group (with finite center),  $\mathfrak{g}$ its Lie algebra, and denote by $\mathfrak{k}$ the Lie algebra of a maximal compact subgroup $K<G$. Then $\mathfrak{g} = \mathfrak{k} \oplus \mathfrak{k}^\perp$ where $\mathfrak{k}^\perp$ is the orthogonal complement with respect to the Killing form on $\mathfrak{g}$. We choose $\mathfrak{a} \subset \mathfrak{g}$ a maximal Abelian Cartan subspace in $\mathfrak{k}^\perp$, and denote by  $\Sigma = \Sigma(\mathfrak{g}, \mathfrak{a})$ the system of restricted roots. We choose $\Delta \subset \Sigma$ a system of simple roots, and let  $\Sigma^+$ denote the set of positive roots, and $\Sigma^-$ the set of negative roots.
Let $\Theta \subset \Delta$ be a subset. We set 
$$\mathfrak{u}_\Theta = \sum_{\alpha \in \Sigma_\Theta^+} \mathfrak{g}_{\alpha},\,\mathfrak{u}^{opp}_\Theta = \sum_{\alpha \in \Sigma_\Theta^+} \mathfrak{g}_{-\alpha} $$ 
where  $\Sigma_\Theta^+ = \Sigma^+ \backslash (\mathrm{Span}(\Delta - \Theta))$, and 
$$\mathfrak{l}_\Theta =\mathfrak{g}_0 \oplus  \sum_{\alpha \in \mathrm{Span}(\Delta - \Theta)\cap \Sigma^+} (\mathfrak{g}_{\alpha} \oplus \mathfrak{g}_{-\alpha}).$$

Then the standard parabolic subgroup $P_\Theta$ associated to $\Theta \subset \Delta$ is the normalizer in $G$ of $\mathfrak{u}_\Theta$. 
We also denote by $P^{opp}_\Theta$ the normalizer in $G$ of $\mathfrak{u}^{opp}_{\Theta}$.

The group $P_\Theta$ is the semidirect product of its unipotent radical $U_\Theta: = \exp(\mathfrak{u}_\Theta)$ and the Levi subgroup $L_\Theta = P_\Theta \cap P^{opp}_\Theta$. 
The Lie algebra of $L_\Theta$ is $\mathfrak{l}_\Theta$. In particular the Lie algebra $\mathfrak{p}_\Theta$ of $P_\Theta$ decomposes as 
$ \mathfrak{p}_\Theta = \mathfrak{l}_\Theta \oplus \mathfrak{u}_\Theta$. 
Note that with our convention  $P_{\emptyset} = G$ and $P_{\Delta}$ is the minimal parabolic subgroup. 

The Levi subgroup $L_\Theta$ acts via the adjoint action on $\mathfrak{u}_\Theta$. 
Let $\mathfrak{z}_\Theta$ denote the center of $\mathfrak{l}_\Theta$. 
Then $\mathfrak{u}_\Theta$ decomposes into the weight spaces $\mathfrak{u}_\beta$, $\beta \in \mathfrak{z}^*_\Theta$, 
$$
\mathfrak{u}_\beta: = \{ N \in \mathfrak{u}_\Theta \, |\, \mathrm{ad}(Z)N = \beta(Z) N , \, \forall Z \in \mathfrak{z}_\Theta \}. 
$$

Note that 
$$
\mathfrak{u}_\beta = \sum_{\alpha \in \Sigma^+_\Theta, \, \alpha|_{\mathfrak{z}_\Theta} = \beta }  \, \, \mathfrak{g}_{\alpha} .
$$
There is a unique way to write $\beta \in \mathfrak{z}^*_\Theta$ as the restriction of a root in $\mathfrak{a}^*$ which lies in the span of $\Theta$ to $\mathfrak{z}_\Theta$, so, with a slight abuse of notation we consider $\beta$ as an element of $\mathfrak{a}^*$ and write: 
$$ \mathfrak{u}_\beta =\sum_{\alpha \in \Sigma^+_\Theta, \alpha= \beta \, mod\,  \mathrm{Span}(\Delta - \Theta)}\, \,  \mathfrak{g}_{\alpha}.$$

Any $\mathfrak{u}_\beta$ is invariant by $L_\Theta$ and is an irreducible representation of $L_\Theta$. 
The relation $[\mathfrak{u}_\beta,\mathfrak{u}_{\beta'}] \subset \mathfrak{u}_{\beta + \beta'}$ is satisfied, and the Lie algebra $\mathfrak{u}_\Theta$ is generated by the $\mathfrak{u}_\beta$ with $\beta \in \Theta$. We call $\mathfrak{u}_\beta$ with $\beta \in \Theta$ indecomposable. 

\begin{examples} 
\begin{enumerate}
\item Let $G$ be a split real form, and $\Theta = \Delta$. Then $\mathfrak{u}_\beta = \mathfrak{g}_\beta$ for all $\beta \in \Sigma^+$, and $\mathfrak{u}_\beta$ is indecomposable if $\beta \in \Delta$. 
\item Let $G$ be a Lie group of Hermitian type. Then the root system is of type $C_r$ if $G$ is of tube type, and of type $BC_r$ (non-reduced) if $G$ is not of tube type. 
Let $\Delta =\{ \alpha_1, \cdots  , \alpha_r\}$, and let $\Theta  = \{ \alpha_r\}$ be the subset such that $P_\Theta$ is the stabilizer of a point in the Shilov boundary of the Hermitian symmetric space associated to $G$. 
Then $\mathfrak{u}_\Theta = \mathfrak{u}_{\alpha_r}$ if $G$ is of tube type, and $\mathfrak{u}_\Theta= \mathfrak{u}_{\alpha_r} \oplus \mathfrak{u}_{2 \alpha_r}$ if $G$ is not of tube type. 
\end{enumerate}
\end{examples}

\subsection{$\Theta$-positive structures} 
We can now give the definition of $\Theta$-positive structures. 
\begin{definition} 
Let $G$ be a semisimple Lie group with finite center. Let $\Theta \subset \Delta$ be a subset of simple roots. 
We say that $G$ admits a $\Theta$-positive structure if for all $\beta \in \Theta$ there exists an $L^\circ_\Theta$-invariant sharp convex cone in $\mathfrak{u}_\beta$.
\end{definition}

For a more geometric characterization of $\Theta$-positivity we refer to Theorem~\ref{cor:positive} below.

\begin{theorem}\label{thm:classification}
A semisimple Lie group $G$ admits a $\Theta$-positive structure if and only if $(G, \Theta)$ belongs to one of the following four cases: 
\begin{enumerate}
\item $G$ is a split real form, and $\Theta = \Delta$. 
\item $G$ is of Hermitian type of tube type and $\Theta = \{ \alpha_r\}$. 
\item $G$ is locally isomorphic to $\SO(p,q)$, $p \neq q$, and $\Theta = \{ \alpha_1, \cdots, \alpha_{p-1}\}$.
\item $G$ is a real form of $F_4, E_6, E_7, E_8$, whose restricted root system is of type $F_4$, and $\Theta = \{ \alpha_1, \alpha_2\}$. 
\end{enumerate}
\end{theorem}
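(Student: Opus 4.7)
The plan is to reduce the existence of an $L^{\circ}_\Theta$-invariant sharp convex cone in $\mathfrak{u}_\beta$ to a representation-theoretic criterion, and then to run this criterion through the classification of simple real Lie algebras and their restricted root systems. The key structural observations are that each $\mathfrak{u}_\beta$ is an \emph{irreducible} $L^{\circ}_\Theta$-module and that the one-parameter subgroup in the center of $L^{\circ}_\Theta$ corresponding to the fundamental coweight dual to $\beta$ acts by positive scalars; so the question reduces to the action of the semisimple part $[L^{\circ}_\Theta, L^{\circ}_\Theta]$ on $\mathfrak{u}_\beta$. I would invoke the classical criterion of Vinberg/Kostant: an irreducible representation of a connected reductive Lie group admits a non-zero invariant sharp convex cone if and only if it is either one-dimensional (with the center acting by positive scalars) or it is of ``conic'' Jordan type, i.e.\ it is the underlying space of a Euclidean Jordan algebra with structure group acting, and the cone is the cone of squares. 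This dichotomy governs the whole classification.

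The first part of the argument, producing the four families, is then an explicit construction in each case: for $\Theta = \Delta$ and $G$ split, each $\mathfrak{u}_\beta = \mathfrak{g}_\beta$ is one-dimensional so a half-line suffices; for $G$ of Hermitian tube type with $\Theta = \{\alpha_r\}$, the space $\mathfrak{u}_{\alpha_r}$ is a simple Euclidean Jordan algebra and the cone of squares works (this is the viewpoint behind $\mathrm{Sp}(2n,\mathbb{R})^{\succ 0}$); for $\mathrm{SO}(p,q)$, $p<q$, with $\Theta = \{\alpha_1,\ldots,\alpha_{p-1}\}$, the only indecomposable piece that is not one-dimensional is $\mathfrak{u}_{\alpha_{p-1}}$, which one identifies with $\mathbb{R}^{1,q-p+1}$ carrying an $L^{\circ}_\Theta$-invariant quadratic form of Lorentzian signature, and the invariant cone is the future timelike cone; and for the exceptional real forms of $F_4, E_6, E_7, E_8$ with restricted root system $F_4$ and $\Theta = \{\alpha_1,\alpha_2\}$, the two indecomposable pieces are identified with simple Euclidean Jordan algebras of $3\times 3$ Hermitian matrices over $\mathbb{R}, \mathbb{C}, \mathbb{H}, \mathbb{O}$ respectively, each with its standard positive-definite cone. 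In all four cases the quoted cone is preserved by the adjoint $L^{\circ}_\Theta$-action essentially by construction.

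The converse---that no other pair $(G, \Theta)$ works---is the combinatorial heart of the argument. I would exhaust the restricted root systems of simple real Lie algebras (types $A_r, B_r, C_r, D_r, BC_r, E_6, E_7, E_8, F_4, G_2$) together with their possible subsets $\Theta \subsetneq \Delta$, and for each pair compute (i) the set $\Sigma^{+}_{\Theta}$, (ii) the decomposition of roots modulo $\mathrm{Span}(\Delta - \Theta)$ to identify the indecomposable pieces $\mathfrak{u}_\beta$ with $\beta \in \Theta$, and (iii) the isomorphism type of $\mathfrak{u}_\beta$ as a representation of $L^{\circ}_\Theta$, using the Satake diagram and the known root-space multiplicities for the given real form. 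The invariant cone criterion then rules out all remaining cases: one exhibits, for some $\beta \in \Theta$, either a non-trivial pair of opposite highest-weight orbits (forcing $C\cap (-C) \neq \{0\}$) or an $L^{\circ}_\Theta$-module that fails to be of Jordan type (for instance a tensor or spin module whose highest-weight orbit is not contained in a half-space).

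The main obstacle is precisely this last step: the case-by-case verification is delicate because for non-split, non-Hermitian groups one must track which restricted root spaces get identified inside a single $\mathfrak{u}_\beta$ and correctly identify the resulting $L^{\circ}_\Theta$-representation. The subtle subcases are Hermitian groups of \emph{non-tube} type (restricted root system $BC_r$), where $\mathfrak{u}_{2\alpha_r}$ is a Jordan algebra but $\mathfrak{u}_{\alpha_r}$ carries no invariant cone, so no admissible $\Theta$ containing $\alpha_r$ survives; and the exceptional real forms with restricted root system $F_4$, where one must match the higher-dimensional $\mathfrak{u}_{\alpha_i}$ against the Hermitian-matrix Jordan algebras over $\mathbb{R}, \mathbb{C}, \mathbb{H}, \mathbb{O}$, which is where the link with the exceptional series $F_4, E_6, E_7, E_8$ arises naturally.
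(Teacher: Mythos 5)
Your overall architecture coincides with the paper's: both reduce the definition to the existence of an invariant sharp convex cone in each irreducible $L^\circ_\Theta$-module $\mathfrak{u}_\beta$, $\beta\in\Theta$, and then run a representation-theoretic criterion through the restricted root systems. The constructive half (exhibiting the cones in the four families) is fine and matches the paper. The gap is in the criterion you invoke for the exclusion step. The dichotomy you attribute to Vinberg/Kostant --- an irreducible representation of a connected reductive group admits an invariant sharp convex cone if and only if it is one-dimensional or the underlying space of a Euclidean Jordan algebra with the group in its structure group --- is not a theorem. What is true (Koecher--Vinberg) is that invariant cones which are in addition \emph{homogeneous and self-dual} correspond to Euclidean Jordan algebras; sharp convex invariant cones form a strictly larger class. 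A counterexample to your dichotomy: $\SL(2,\RR)$ acting on the space $\mathrm{Sym}^4(\RR^2)\cong\RR^5$ of binary quartic forms preserves the sharp convex cone of nonnegative quartics, yet this module is not the representation underlying any $5$-dimensional Euclidean Jordan algebra (the cone is not self-dual, and the invariant quadratic form has signature $(3,2)$, not Lorentzian). Since your whole ``only if'' direction rests on ruling out pairs $(G,\Theta)$ by showing that some $\mathfrak{u}_\beta$ ``fails to be of Jordan type'', a criterion that is strictly stronger than the actual condition can discard admissible pairs, and at minimum leaves every exclusion unjustified.

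The paper instead quotes the correct criterion (Benoist, Proposition~4.7): an irreducible representation of a connected reductive group preserves a sharp convex cone if and only if it is proximal (one-dimensional highest weight space) and its highest weight lies in $2P$, equivalently the maximal compact subgroup has a nonzero fixed vector. Applied to $L^\circ_\Theta$ acting on $\mathfrak{u}_\beta$ this becomes two purely combinatorial conditions on the Dynkin diagram of the restricted root system: $\dim\mathfrak{g}_\beta=1$ for every $\beta\in\Theta$, and each node of $\Theta$ is either disconnected from $\Delta-\Theta$ or joined to it only by a double arrow pointing toward $\Delta-\Theta$. The four families then drop out by inspection of the diagrams, with no need to identify the isomorphism type of each $\mathfrak{u}_\beta$ as an $L^\circ_\Theta$-module or to track Satake data beyond the restricted root multiplicities. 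If you want to keep the Jordan-algebra picture (which is indeed how the cones look a posteriori in all four cases), you must derive it from this criterion rather than assume it as the criterion.
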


\begin{proof}
We make use of necessary and sufficient conditions for the existence of an invariant sharp convex cone in the vector space $V$ of an irreducible representation $H \to \GL(V)$ of a connected reductive group $H$, see for example \cite[Proposition~4.7]{Benoist}.
Such an invariant cone exists if and only if the representation is proximal, i.e.\ the highest weight space is one dimensional, and  if the highest weight is contained in $2P$, where $P$ is the weight lattice. Another equivalent definition is that the maximal compact subgroup of $H$  has a nonzero invariant vector in $V$.

Applying this to the case when $H = L^\circ_\Theta$ and $V = \mathfrak{u}_\beta$ this leads to the following necessary and sufficient criteria for the Dynkin diagram of the system of restricted roots $\Sigma$: 


\begin{enumerate}
\item $ \forall \beta \in \Theta$ the root space $\mathfrak{g}_\beta$ is one-dimensional. (\emph{representation is proximal}) 
\item $\forall \beta \in \Theta$ the node of the Dynkin diagram with label $\beta$ is either connected to the nodes in $\Delta-\Theta$ by a double arrow pointing towards $\Delta-\Theta$, or it is not connected to $\Delta-\Theta$ at all. (\emph{highest weight is in $2P$})
\end{enumerate}
From this we deduce the above list of pairs $(G,\Theta)$.
\end{proof}

In order to describe the $\Theta$-positive structure in more detail we denote for every $\beta \in \Theta$ by $c_\beta \subset \mathfrak{u}_\beta$ the $L^\circ_\Theta$ invariant closed convex cone, by $c^\circ_\beta$ its interior, and by $C_\beta: = \mathrm{exp}(c_\beta) \subset U_\beta = \mathrm{exp}(\mathfrak{u}_\beta)$ its image in $U_\beta \subset U_\Theta$. 

Note that by the classification of Dynkin diagrams there is at most one node in  $ \Theta$ which is connected to $\Delta - \Theta$, we denote this node/simple root by $\beta_\Theta$. 
For all $\beta \in \Theta\backslash \{ \beta_{\Theta}\}$ we then have that $\mathfrak{u}_\beta \cong \RR$, and $c_\beta \cong \RR_+$. For $\beta_\Theta$, the vector space $\mathfrak{u}_{\beta_\Theta}$ is of dimension $\geq 2$, and $c_{\beta_\Theta} \subset \mathfrak{u}_{\beta_\Theta}$ is a homogeneous sharp convex cone.

We define the $\Theta$-nonnegative semigroup $U_\Theta^{\geq 0}$ to be the subsemigroup of $U_\Theta$ generated by $C_\beta$, $\beta \in \Theta$. 
Similarly we consider the cones $c^{opp}_\beta$, $c^{opp, \circ}_\beta$ and $C^{opp}_\beta = \mathrm{exp}(c^{opp}_\beta)$, and denote by ${U^{opp}_\Theta}^{\geq 0}$ the subsemigroup of $U^{opp}_\Theta$ generated by  $C^{opp}_\beta$, $\beta \in \Theta$.

We define the $\Theta$-nonnegative semigroup $G^{\geq 0}_\Theta$ to be the subsemigroup generated by $U_\Theta^{\geq 0}$, ${U^{opp}_\Theta}^{\geq 0}$, and $L_{\Theta,\circ}$, where $L_{\Theta,\circ}$ is the connected component of the identity in $L_\Theta$. 

%

\begin{examples} 
\begin{enumerate}
\item When $G$ is a split real form, and $\Theta = \Delta$. Then $G^{\geq 0}_\Theta = G^{\geq 0 }$ is the set of nonnegative elements, defined by Lusztig. 

\item Let $G$ be a Lie group of Hermitian type, which is of tube type and $\Theta  = \{ \alpha_r\}$. 
Then  $G^{\geq 0}_\Theta$ is the closure of the subsemigroup $G^{\succ 0} \subset G$ defined above. 
\end{enumerate}
\end{examples}

\subsection{The $\Theta$-positive semigroup}
In this section we define the $\Theta$-positive semigroups $U^{>0}_\Theta \subset U_\Theta$ and $G^{>0}_\Theta \subset G$, and give an explicit parametrization of $U^{>0}_\Theta $. 

Let $G$ be a semisimple Lie group with a $\Theta$-positive structure. 
We associate to $\Theta$ a subgroup $W(\Theta)$ of the Weyl group $W$. Recall that the Weyl group $W$ is generated 
by the reflections $s_\alpha$, $\alpha \in \Delta$. 
We set $\sigma_\beta = s_\beta $ for all $\beta \in \Theta - \{ \beta_\Theta\}$, and define $\sigma_{\beta_\Theta}$ to be the longest element of the Weyl group $W_{\{\beta_\Theta\} \cup (\Delta - \Theta)}$ of the subrootsystem generated by $\{\beta_\Theta\} \cup \Delta - \Theta$, i.e.\ $W_{\{\beta_\Theta\} \cup \Delta - \Theta} \subset W$ is the subgroup generated by $s_{\alpha}$ with $\alpha \in \{\beta_\Theta\} \cup \Delta - \Theta$. 

The group $W(\Theta)\subset W$ is defined to be the subgroup generated by $\sigma_\beta$, $\beta \in \Theta$. 

It turns out that the group $W(\Theta)$ is isomorphic to a Weyl group of a simple root system, in such a way that the $\sigma_\beta$ correspond to standard generators. If we denote the Weyl group of a root system $\Sigma'$ by $W_{\Sigma'}$ we have: 
\begin{enumerate}
\item If $G$ is a split real form and  $\Theta = \Delta$, $W(\Theta) = W$.
\item If $G$ is of Hermitian type of tube type and $\Theta = \{ \alpha_r\}$, $W(\Theta) \cong W_{A_1}$. 
\item If $G$ is locally isomorphic to $\SO(p,q)$, $p \neq q$, and $\Theta = \{ \alpha_1, \cdots, \alpha_{p-1}\}$, $W(\Theta) \cong W_{B_{p-1}}$.
\item If $G$ is a real form of $F_4, E_6, E_7, E_8$, whose restricted root system is of type $F_4$, and $\Theta = \{ \alpha_1, \alpha_2\}$, $W(\Theta) \cong W_{G_2}$. 
\end{enumerate}

The group $W(\Theta)$ acts on the weight spaces $\mathfrak{u}_\beta$, $\beta \in \mathrm{span}(\Theta)$. 
Combinatorically this action is the same as the action of the Weyl group $W_{\Sigma'}$  with $W(\Theta) \cong W_{\Sigma'}$ on the root spaces $\mathfrak{g'}_\alpha$, $\alpha \in \Sigma'$. 

For any $\beta \in \Theta$ we define a map 
$$
x_\beta: \mathfrak{u}_\beta \longrightarrow U_\beta \subset U_\Theta, \, \, v \mapsto \exp(v). 
$$

Let $w_\Theta^0 \in W(\Theta)$ be the longest element, and let $w_\Theta^0 = \sigma_{i_1} \cdots \sigma_{i_l}$ be a reduced expression. 
We define a map 
$$
F_{\sigma_{i_1} \cdots \sigma_{i_l}} : c^\circ_{\beta_{i_1}}\times \cdots \times c^\circ_{\beta_{i_l} }\longrightarrow U_\Theta
$$

by
$$
 (v_{i_1}, \cdots, v_{i_l}) \mapsto x_{\beta_{i_1}}(v_{i_1})\cdots x_{\beta_{i_l}}(v_{i_l})
 $$
 
 \begin{theorem}
 The image $U_\Theta^{>0}:= F_{\sigma_{i_1} \cdots \sigma_{i_l}} ( c^\circ_{\beta_{i_1}}\times \cdots \times c^\circ_{\beta_{i_l} }) $ is independent of the reduced expression of $w_\Theta^0 $. 
We call $U_\Theta^{>0}$ the $\Theta$-positive semigroup of $U_\Theta$. 
 \end{theorem}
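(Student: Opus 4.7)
My proof plan rests on the standard reduction used by Lusztig in the split case, adapted to the Coxeter group $W(\Theta)$. By Matsumoto--Tits, any two reduced expressions for $w_\Theta^0$ in the Coxeter group $W(\Theta)$ are connected by a finite sequence of braid moves of the form $\sigma_\beta \sigma_{\beta'} \sigma_\beta \cdots = \sigma_{\beta'} \sigma_\beta \sigma_{\beta'} \cdots$ ($m_{\beta\beta'}$ factors on each side). Therefore it suffices to show, for every pair $\beta, \beta' \in \Theta$ and every choice of interior cone data, that there exists a bijection
\[
\Phi_{\beta\beta'} \colon c^\circ_\beta \times c^\circ_{\beta'} \times \cdots \longrightarrow c^\circ_{\beta'} \times c^\circ_\beta \times \cdots
\]
such that
\[
x_\beta(v_1) x_{\beta'}(v_2) x_\beta(v_3)\cdots = x_{\beta'}(v'_1) x_\beta(v'_2) x_{\beta'}(v'_3)\cdots,
\]
where $(v'_1,v'_2,\ldots)=\Phi_{\beta\beta'}(v_1,v_2,\ldots)$. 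Granted this, the image of $F_{\sigma_{i_1}\cdots \sigma_{i_l}}$ is invariant under any single braid move, hence under the whole sequence, and the theorem follows.

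Next I would reduce each such braid identity to a computation inside the rank-two subgroup $G_{\{\beta,\beta'\}}$ generated by $U_\beta$, $U_{-\beta}$, $U_{\beta'}$, $U_{-\beta'}$ and the Levi. The braid exponent $m_{\beta\beta'}$ matches the Coxeter exponent of the rank-two Coxeter system arising in $W(\Theta)$, which by the preceding classification of $W(\Theta)$ is of type $A_1\times A_1$, $A_2$, $B_2$, or $G_2$. The strategy is then to verify the required identity, together with positivity of $\Phi_{\beta\beta'}$, in each of these four rank-two cases: the $A_1\times A_1$ case is just commutation in the group (distinct $\mathfrak{u}_\beta$ with $[\cdot,\cdot]=0$); the $A_2$ case reduces to the $\SL_3$ identity already displayed in the introduction, applied coordinate-wise after trivializing $\mathfrak{u}_\beta\cong\RR$; and the $B_2$, $G_2$ cases are the genuinely new computations, taking place in the rank-two Levi quotient acting on the pair $(\mathfrak{u}_\beta,\mathfrak{u}_{\beta'})$.

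The substantive point is that in the new cases $\SO(p,q)$ and the exceptional $F_4$-type family, the indecomposable weight space $\mathfrak{u}_{\beta_\Theta}$ is higher dimensional and carries the structure of a Jordan algebra (or Jordan pair) with $L^\circ_\Theta$-invariant sharp cone $c^\circ_{\beta_\Theta}$, while the remaining $\mathfrak{u}_\beta$ for $\beta\in\Theta\setminus\{\beta_\Theta\}$ are one-dimensional with $c^\circ_\beta\cong\RR_{>0}$. I would use the $L^\circ_\Theta$-equivariance of the cones, together with explicit commutation formulas among root vectors in the rank-two subalgebra, to write the transition $\Phi_{\beta\beta'}$ as a rational map in the Jordan-algebra structure whose formulas are manifestly positive, i.e.\ involve only the Jordan product, inverses on $c^\circ_{\beta_\Theta}$, and sums. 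This is essentially the content of the Chevalley-type computation analogous to the $\SL(2,\RR)$ identity at the end of Section~\ref{sec:sl2}, carried out in the rank-two Levi of $G_{\{\beta,\beta'\}}$.

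The main obstacle is the $B_2$ and $G_2$ braid relations. Unlike the split $A_2$ case, here the three- or four-fold commutation involves a vector variable $v\in c^\circ_{\beta_\Theta}$ and requires inverting elements in the Jordan algebra; one must verify that the inverse, evaluated on the interior of the cone produced by the previous steps, stays in the open cone. I expect to handle this by expressing the rank-two identity as an equality in $G_{\{\beta,\beta'\}}$ via the corresponding Bruhat/Gauss decomposition, then reading off $\Phi_{\beta\beta'}$ from the unipotent factor and checking inductively, using $L^\circ_\Theta$-invariance of the cones, that each intermediate quantity stays in the appropriate open convex cone. Once the rank-two case is settled, the general statement follows formally from the Matsumoto--Tits reduction above.
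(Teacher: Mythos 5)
Your proposal follows essentially the same route as the paper's own argument: reduce via Matsumoto--Tits to a single braid move, compute the transition map explicitly in the relevant rank-two situation (trivial for commuting generators, the $\SL(3,\RR)$ computation for type $A_2$, and the genuinely new $B_2$ and $G_2$ cases where the vector-valued variable in $\mathfrak{u}_{\beta_\Theta}$ is handled through the quadratic form, respectively the Freudenthal product on Hermitian $3\times3$ matrices --- i.e.\ exactly the Jordan-algebra structure you invoke), and then check that the resulting rational transition functions preserve the open cones. The paper phrases the rank-two computation as a Berenstein--Zelevinsky-style system of ``polynomial'' equations in the universal enveloping algebra rather than via the Bruhat/Gauss decomposition, but this is a difference of bookkeeping, not of substance.
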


We just sketch the idea of the proof, which is inspired by the strategy of Berenstein and Zelevinsky \cite{Berenstein_Zelevinsky} to compute the transition functions for total positivity in split real Lie groups. The whole proof is rather involved and will appear in \cite{Guichard_Wienhard_pos}. The explicit formulas for the case when $G = \SO(3,q)$, $q>3$, are given in Section~\ref{sec:so3q}.
\begin{proof}
Any two reduced expression of $w_\Theta^0 $ differ by a braid relation in the Weyl group $W(\Theta)$. There are three possible braid relations: $\sigma_i \sigma_j \sigma_i = \sigma_j \sigma_i \sigma_j$, $\sigma_i \sigma_j \sigma_i \sigma_j= \sigma_j \sigma_i \sigma_j\sigma_i$, and $\sigma_i \sigma_j \sigma_i \sigma_j \sigma_i \sigma_j = \sigma_j \sigma_i \sigma_j \sigma_i \sigma_j \sigma_i $.
Using explicit calculations in the universal enveloping algebra of $\mathfrak{u}_\Theta$ we get explicit systems of ``polynomial" equations for each braid relation, which combinatorically have the same structure as the polynomial equations described in \cite{Berenstein_Zelevinsky} for the totally positive semigroup in the split real group whose Weyl group $W_{\Sigma'}$ is isomorphic to $W(\Theta)$, if the equations are interpreted in the right way,  because now some of the variables are {\em not} scalars, but vectors. 

For the first kind of braid relation, the involved cones $c_{\beta_i}$ and $c_{\beta_j}$ are always $\RR_+$, and the computation reduces basically to the computation in the $\SL(3,\RR)$-case discussed above.
 
For the second kind, which appears for example when $G = \SO(p,q)$, one of the cones, say $c_{\beta_j}$ is the cone of positive vectors in a vector space, which is equipped with a quadratic form $B$ of signature $(1, q-p+1)$, whose first component is positive. In this case even powers in the polynomial equation need to be interpreted as $v^{2k} = B(v,v)^k$, and odd powers as $v^{2k+1} = B(v,v)^k v$.
The computations reduce essentially to the case when $(p,q) = (3,q)$, for which we give the precise formulas in Section~\ref{sec:so3q}.

For the third kind of braid relation, which appear when $G$ is a real form of $F_4, E_6, E_7$, or $E_8$ and $\Theta = \{ \alpha_1, \alpha_2\}$, the cone $c_{\beta_j}$ is identified with the cone of positive definite matrices in the vector space of Hermitian $3\times 3$ matrices over $\RR, \CC, \HH$, or $\OO$. Here the square of $v$ needs to be suitably interpreted as the Freudenthal product $v\times v$, and the third power $v^3$ or terms of the form $v^2w$ as the trilinear form $(v,v,w)$, see e.g.\ \cite{Yokota} for the definition of the Freudenthal product and the trilinear form. Here the determination of the ``polynomial" equations and their solution is much more complicated and will be given in \cite{Guichard_Wienhard_pos}.

Once the explicit system of ``polynomial" equations is determined, we prove that starting with variables $v_{i}, v_j$ in the cones $c^\circ_{\beta_{k}}$, $k = i,j$, there exists a unique solution, which gives us the transition functions. These transition functions are then shown to take again values in the open cones $c^\circ_{\beta_{l}}$, $l = j,i$. As a consequence the semigroup $U_\Theta^{>0}$ is well defined. 
\end{proof}

We define the $\Theta$-positive semigroup $G^{> 0}_\Theta$ to be the subsemigroup generated by $U_\Theta^{>0}$, ${U^{opp}_\Theta}^{> 0}$, and $L_{\Theta}^\circ$. 

\subsection{$\Theta$-positivity of triples} 
Let $G$ be a semisimple Lie group with a $\Theta$-positive structure. 
We consider the generalized flag variety $G/P_\Theta$. Given $F \in G/P_\Theta$ we denote by $\Omega_{F_\Theta}$ the set of all points in $G/P_\Theta$ which are transverse to $F$. 

We fix $E_\Theta$ and $F_\Theta$ to be the standard flags 
such that $\mathrm{Stab}_G(F_\Theta) = P_\Theta$ and $\mathrm{Stab}_G(E_\Theta) = P^{opp}_\Theta$. 
Given any $S_\Theta \in G/P_\Theta$ transverse to $F_\Theta$, there exists $u_{S_\Theta} \subset U_\Theta$ such that $S_\Theta = u_{S_\Theta} E_\Theta$. 
\begin{definition}
The triple $(E_\Theta, S_\Theta, F_\Theta)$ is $\Theta$-positive if $u_{S_\Theta} \in U^{>0}_{\Theta}$. 
\end{definition}

\begin{theorem}
The set 
$$
\{S_\Theta \in G/P_\Theta\, |\, (E_\Theta, S_\Theta, F_\Theta)\text{ is } \Theta\text{-positive}\} 
$$
is a connected component of the intersection  $\Omega_{E_\Theta}\cap \Omega_{F_\Theta}$. This connected component has the structure of a semigroup. 
\end{theorem}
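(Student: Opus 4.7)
The plan is to exploit the diffeomorphism $U_\Theta \to \Omega_{F_\Theta}$, $u\mapsto u\cdot E_\Theta$. Under this identification, the set of $S_\Theta$ making $(E_\Theta, S_\Theta, F_\Theta)$ $\Theta$-positive corresponds to $U_\Theta^{>0}$, while $\Omega_{E_\Theta}\cap\Omega_{F_\Theta}$ corresponds to the open subset
\[
\widetilde U := \{u\in U_\Theta\, |\, u\cdot E_\Theta\in\Omega_{E_\Theta}\}.
\]
The theorem then splits into four claims: (a) $U_\Theta^{>0}\subset\widetilde U$; (b) $U_\Theta^{>0}$ is open and connected in $U_\Theta$; (c) $U_\Theta^{>0}$ is closed in $\widetilde U$; and (d) $U_\Theta^{>0}\cdot U_\Theta^{>0}\subset U_\Theta^{>0}$.

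Statement (b) is largely formal: connectedness is immediate from $U_\Theta^{>0}$ being the image of the connected open set $c^\circ_{\beta_{i_1}}\times\cdots\times c^\circ_{\beta_{i_l}}$ under the continuous map $F_{\sigma_{i_1}\cdots\sigma_{i_l}}$. Openness follows from the dimension count $\sum_{k=1}^l \dim\mathfrak{u}_{\beta_{i_k}} = \dim U_\Theta$ (a reduced expression of $w_\Theta^0$ enumerates all positive weights of $\mathfrak{z}_\Theta$ in $\mathfrak{u}_\Theta$, and $W(\Theta)$ permutes these weights preserving dimensions), combined with the local-diffeomorphism property of $F_{\sigma_{i_1}\cdots\sigma_{i_l}}$ that is implicit in the previous theorem. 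For the semigroup property (d), the strategy is left multiplication: given $u\in U_\Theta^{>0}$, $\beta\in\Theta$, and $v\in c^\circ_\beta$, pick a reduced expression of $w_\Theta^0$ starting with $\sigma_\beta$, write $u = x_\beta(v_1)\,x_{\beta_{j_2}}(v_2)\cdots x_{\beta_{j_l}}(v_l)$ in this expression, and use the abelianity of $\mathfrak{u}_\beta$ (which holds in each family of Theorem~\ref{thm:classification}) to obtain
\[
x_\beta(v)\cdot u \;=\; x_\beta(v+v_1)\,x_{\beta_{j_2}}(v_2)\cdots x_{\beta_{j_l}}(v_l)\;\in\;U_\Theta^{>0},
\]
since $v+v_1\in c^\circ_\beta$. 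Iterating over the generators of $U_\Theta^{>0}$ yields (d).

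Statement (a) should follow from an inductive Gauss-type factorization $u\in U_\Theta^{opp}\cdot L_\Theta^\circ\cdot U_\Theta$, built from the rank-one positive identity asserting that $\exp(v)\cdot\exp(w)\in U_\Theta^{opp}\cdot L_\Theta^\circ\cdot U_\Theta$ has its outer factors in the corresponding open positive cones, for $v\in c^\circ_\beta$ and $w\in c^{opp,\circ}_\beta$; in the $\SL(2,\RR)$ case this is exactly the explicit computation at the end of Section~\ref{sec:sl2}. The main obstacle, and what I expect to be the heart of the proof, is (c). The approach is to attach to each triple $(E_\Theta, u\cdot E_\Theta, F_\Theta)$ a family of ``positivity invariants'' --- the $\Theta$-positive analogues of the projective triple ratios of Section~\ref{sec:triples} and of the Maslov-type index of Section~\ref{sec:maslov} --- that take values in the open cones $c^\circ_\beta$ on $U_\Theta^{>0}$ and degenerate to the boundary precisely when one of the parameters $v_k$ escapes its open cone $c^\circ_{\beta_{i_k}}$; such degeneration must then force $u\cdot E_\Theta$ to lose transversality with $E_\Theta$, so the limit exits $\widetilde U$. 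This case-by-case analysis hinges on the specific geometry of each cone $c_\beta$ (half-line, Lorentz cone for $\SO(p,q)$, or self-dual symmetric cone for the exceptional family) and is closely intertwined with the braid-relation computations of the previous theorem. Once (a)--(d) are in place, $U_\Theta^{>0}$ is a nonempty, connected, open and closed subset of $\widetilde U$, hence a connected component of $\Omega_{E_\Theta}\cap\Omega_{F_\Theta}$, and (d) provides the asserted semigroup structure.
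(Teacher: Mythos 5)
The paper states this theorem without proof (all proofs are deferred to \cite{Guichard_Wienhard_pos}), so your proposal has to stand on its own. The reduction to the four claims (a)--(d) via the identification $U_\Theta \cong \Omega_{F_\Theta}$, $u\mapsto u\cdot E_\Theta$, is the right architecture, and (b) and (d) are essentially complete. The dimension count behind (b) is correct: a reduced expression of $w_\Theta^0$ enumerates the positive roots of $\Sigma'$ and $W(\Theta)$ permutes the weight spaces $\mathfrak{u}_\gamma$, so $\sum_k\dim\mathfrak{u}_{\beta_{i_k}}=\dim\mathfrak{u}_\Theta$; note however that openness needs the differential of $F_{\sigma_{i_1}\cdots\sigma_{i_l}}$ to be invertible on the open cones, which is not literally contained in the preceding theorem (it only asserts independence of the image from the reduced expression), so this is a small additional input. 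Your left-multiplication argument for (d) is exactly Lusztig's, and it legitimately uses the preceding theorem to rewrite $u$ along a reduced expression beginning with $\sigma_\beta$; the convexity of $c^\circ_\beta$ then does the work.

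The genuine gaps are in (a) and (c), and there you have named the steps rather than carried them out. For (a), the ``rank-one positive identity'' $x_\beta(v)\,x_\beta^{opp}(w)\in {U^{opp}_\Theta}^{>0}L_\Theta^\circ\,U_\Theta^{>0}$ is the correct engine, but for $\beta=\beta_\Theta$ it is not the $\SL(2,\RR)$ computation of Section~\ref{sec:sl2}: it is a statement about the homogeneous cone $c_{\beta_\Theta}$ (a Lorentz cone for $\SO(p,q)$, a cone of positive definite Hermitian $3\times3$ matrices in the exceptional family), and it must be proved in each case; moreover the ``inductive Gauss-type factorization'' that propagates it to all of $U_\Theta^{>0}$ is not described. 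For (c), the proposed positivity invariants are never constructed, and the decisive implication --- that their degeneration to the boundary of the cones forces $u\cdot E_\Theta$ to become non-transverse to $E_\Theta$ --- is precisely the content of the claim being proved; as written it is an assertion, not an argument. A route more consistent with what the paper itself hints at (the decomposition of $U_\Theta^{\geq0}$ for $\SO(3,q)$ into $16$ pieces indexed by words in $W(\Theta)$) is to establish $\overline{U_\Theta^{>0}}=U_\Theta^{\geq0}=\bigsqcup_{w\in W(\Theta)}U_\Theta^{>0}(w)$ and to check that only the top cell $U_\Theta^{>0}(w_\Theta^0)=U_\Theta^{>0}$ meets the transversality locus $\widetilde U$; this replaces the unconstructed invariants by concrete cell-by-cell transversality computations. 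In short: the logical skeleton is sound, but the two steps carrying the real mathematical weight are placeholders.
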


This property can in fact be proven to give a more geometric characterization of a $\Theta$-positive structure on $G$. 

\begin{theorem}\label{cor:positive}
$G$ has a $\Theta$-positive structure if and only if there are two transverse points $E_\Theta, F_\Theta \in G/P_\Theta$ such that a connected component of $\Omega_{E_\Theta}\cap \Omega_{F_\Theta}\subset  G/P_\Theta$ has the structure of a semigroup. 
\end{theorem}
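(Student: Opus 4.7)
The plan is as follows. The forward direction is immediate from the preceding theorem: if $G$ admits a $\Theta$-positive structure then the set of $S_\Theta$ making $(E_\Theta,S_\Theta,F_\Theta)$ a $\Theta$-positive triple is a connected component of $\Omega_{E_\Theta}\cap\Omega_{F_\Theta}$ which is a semigroup, so nothing further is needed.

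For the converse I would transfer the geometric data into the unipotent group. Identifying $\Omega_{F_\Theta}$ with $U_\Theta$ via the orbit map $u\mapsto u\cdot E_\Theta$, a connected component $C$ of $\Omega_{E_\Theta}\cap\Omega_{F_\Theta}$ carrying a semigroup structure corresponds to an open connected subset $C'\subset U_\Theta$ that is closed under the multiplication of $U_\Theta$. Since $L_\Theta=\mathrm{Stab}_G(E_\Theta)\cap\mathrm{Stab}_G(F_\Theta)$ acts on $\Omega_{E_\Theta}\cap\Omega_{F_\Theta}$ and $L^\circ_\Theta$ is connected, $L^\circ_\Theta$ preserves $C$; on $U_\Theta$ this translates into conjugation invariance of $C'$ under $L^\circ_\Theta$.

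The next step is to produce an invariant cone in $\mathfrak{u}_\Theta$ out of the semigroup $C'$ by forming the Lie wedge at the identity,
$$
\mathfrak{c}=\{v\in\mathfrak{u}_\Theta : \exp(tv)\in\overline{C'}\text{ for all sufficiently small }t\geq 0\},
$$
which is a closed convex cone by the semigroup property together with the Baker--Campbell--Hausdorff formula, and which is $\operatorname{Ad}(L^\circ_\Theta)$-invariant by the previous paragraph. It has nonempty interior because $C'$ is open in $U_\Theta$ and the identity lies in $\partial C'$, the non-transverse Schubert stratum accumulating at the identity orbit.

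The final step is to distill from $\mathfrak{c}$ an $L^\circ_\Theta$-invariant sharp convex cone in each indecomposable $\mathfrak{u}_\beta$, $\beta\in\Theta$. Using the $L^\circ_\Theta$-equivariance of the weight decomposition $\mathfrak{u}_\Theta=\bigoplus_\gamma\mathfrak{u}_\gamma$ one obtains, for each such $\beta$, a nonzero $L^\circ_\Theta$-invariant convex cone $\mathfrak{c}_\beta\subset\mathfrak{u}_\beta$; irreducibility of $\mathfrak{u}_\beta$ as an $L^\circ_\Theta$-representation then forces $\mathfrak{c}_\beta$ to be either sharp or all of $\mathfrak{u}_\beta$, and the latter alternative must be ruled out by the properness of $C$ in $G/P_\Theta$. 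This last step will be the main obstacle: naive projection of $\mathfrak{c}$ along the weight decomposition need not preserve sharpness, and the naive intersection $\mathfrak{c}\cap\mathfrak{u}_\beta$ can be trivial, so one is forced to work with the one-parameter subgroups of $\overline{C'}$ tangent to each indecomposable direction and combine this analysis with the irreducibility dichotomy to conclude sharpness. Once the sharp $L^\circ_\Theta$-invariant cones $c_\beta\subset\mathfrak{u}_\beta$ have been produced for every $\beta\in\Theta$, they witness by definition the $\Theta$-positive structure on $G$.
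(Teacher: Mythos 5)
The paper does not actually prove this theorem; it is one of the statements whose proof is deferred to the companion paper \cite{Guichard_Wienhard_pos}, so your attempt can only be measured against the content of the statement itself. Your forward direction is fine (it is an immediate consequence of the preceding theorem), and the overall shape of your converse --- transfer the component to an open, $L^\circ_\Theta$-conjugation-invariant subsemigroup $C'\subset U_\Theta$ and analyze its tangent wedge at the identity --- is a reasonable strategy. But two of your intermediate claims are justified by arguments that do not work as stated. That $e\in\overline{C'}$ does not follow from ``the non-transverse Schubert stratum accumulating at the identity orbit'': the identity lies in that stratum, but nothing in that observation puts it in the closure of the particular component $C'$. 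What does prove it is the invariance of $C'$ under conjugation by $\exp(tZ)$ for a suitable $Z\in\mathfrak{z}_\Theta$ with $\beta(Z)<0$ on every weight $\beta$ of $\mathfrak{u}_\Theta$, which contracts all of $U_\Theta$ to the identity. More seriously, ``the Lie wedge has nonempty interior because $C'$ is open and $e\in\partial C'$'' is not a valid implication: an open set can approach a boundary point along a cusp (compare $\{(x,y)\,:\,y>x^2\}$ at the origin), so the tangent wedge at $e$ of an open set with $e$ in its closure can be degenerate. Here again one must exploit the anisotropic scaling $\mathrm{Ad}(\exp(tZ))$ together with the semigroup property, and the useful conclusion is weight-by-weight, not about the interior of $\mathfrak{c}$ inside all of $\mathfrak{u}_\Theta$.

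The decisive gap is the final step, which you flag but do not carry out: producing, for each $\beta\in\Theta$, a nonzero $L^\circ_\Theta$-invariant \emph{sharp} convex cone with nonempty interior in $\mathfrak{u}_\beta$. You correctly note that the projection of $\mathfrak{c}$ to $\mathfrak{u}_\beta$ need not be sharp and that $\mathfrak{c}\cap\mathfrak{u}_\beta$ may be trivial, but the sentence about ``working with the one-parameter subgroups tangent to each indecomposable direction and combining with the irreducibility dichotomy'' restates the goal rather than supplying an argument. In particular you never exclude the degenerate alternatives: that the cone you obtain in $\mathfrak{u}_\beta$ is $\{0\}$ or a single ray (irreducibility of $\mathfrak{u}_\beta$ as an $L^\circ_\Theta$-module does not rule out an invariant ray unless you separately show there is no invariant vector, and it says nothing about the zero cone), or that the edge $\mathfrak{c}\cap(-\mathfrak{c})$ is a nontrivial subalgebra, in which case $\overline{C'}$ contains a nontrivial subgroup and sharpness fails. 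By the criterion of Benoist invoked in the proof of Theorem~\ref{thm:classification}, the existence of such cones is precisely the strong restriction that singles out the four families of pairs $(G,\Theta)$, so this unproved step is where the entire content of the converse lives.
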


\begin{remark}\label{rem:fourtuples}
Note that this connected component is unique up to exchanging cones $c^\circ_\beta$ with $-c^\circ_\beta$. 

We can define not only the notion of positive triples, but more generally the notion of positive $n$-tuples in $G/P_\Theta$. For example, a four tuple $(E_\Theta, S_\Theta, S'_\Theta, F_\Theta)$ is positive if and only if the two triples $(E_\Theta, S_\Theta, F_\Theta)$ and $(S_\Theta, S'_\Theta, F_\Theta)$ are positive, and the connected component of  $\Omega_{S_\Theta}\cap \Omega_{F_\Theta}$ containing $S'_\Theta$ is contained in the connected component of $\Omega_{E_\Theta}\cap \Omega_{F_\Theta}$ containing $S_\Theta$. 
\end{remark}

We expect that many of the properties which Lusztig proved for totally positivity in split real Lie groups, have an appropriate analogue in the context of $\Theta$-positive, even though Lusztig's proofs sometimes make use of the positivity of the canonical basis for the quantum universal enveloping algebra, for which we do not (yet?) have an analogue here. 

For example we conjecture 
\begin{conjecture}
Any $g \in G_\Theta^{>0}$ acts proximally on $G/P_\Theta$, i.e $g$ has a unique attracting and a unique repelling fix point in $G/P_\Theta$, which are transverse, and such that the action of $g$ on the tangent space at the attracting fix point is strongly contracting, and the action on the tangent space at the repelling fix point is strongly expanding. 
\end{conjecture}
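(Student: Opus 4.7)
The plan is to establish proximality by combining a Gauss--Whitney--type refined decomposition of $G_\Theta^{>0}$ with a Perron--Frobenius / Birkhoff style cone contraction argument applied to the closure of the positive cell $\Omega^+ = \{S_\Theta\in G/P_\Theta \mid (E_\Theta,S_\Theta,F_\Theta) \text{ is }\Theta\text{-positive}\}$. By the theorem preceding the conjecture, $\Omega^+$ is a connected component of $\Omega_{E_\Theta}\cap \Omega_{F_\Theta}$ equipped with a semigroup structure, and the parametrization $F_{\sigma_{i_1}\cdots \sigma_{i_l}}$ extends continuously from the product of the closed cones $\overline{c}_{\beta_{i_k}}$ to a homeomorphism onto $\overline{\Omega^+}$; the latter is thus a compact ball-like domain whose boundary contains $E_\Theta$ and $F_\Theta$.

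The first step is to establish the decomposition
\[
G_\Theta^{>0} \;=\; U_\Theta^{opp,>0}\cdot L_\Theta^\circ \cdot U_\Theta^{>0},
\]
obtained by iterating the commutation relation displayed at the end of Section~\ref{sec:sl2} (the $\SL(2,\RR)$ identity rewriting $u^+u^-$ with all three components positive); this generalizes to $G_\Theta^{>0}$ as it did in Lusztig's split-real and in the Hermitian tube-type settings. Writing $g = u^- \ell u^+$, I would show $g(\overline{\Omega^+}\setminus\{E_\Theta\})\subset \Omega^+$: the factor $u^+$ fixes $F_\Theta$ and preserves $\Omega^+$ by the semigroup property, $\ell\in L_\Theta^\circ$ preserves $\Omega^+$ since it acts by automorphisms of each sharp cone $c_\beta^\circ$, and $u^-\in U_\Theta^{opp,>0}$ sends everything in $\overline{\Omega^+}$ but $E_\Theta$ into $\Omega^+$ by the opposite-side version of positivity. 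Brouwer's theorem then produces a fixed point $x^+$ in $\Omega^+$, and applying the same argument to $g^{-1}$ on the opposite positive cell yields a repelling fixed point $x^-$, automatically transverse to $x^+$.

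To upgrade this to strong contraction of $dg_{x^+}$, I would equip the open sharp convex cones $c_\beta^\circ\subset \mathfrak{u}_\beta$ with their Hilbert metrics and invoke Birkhoff's theorem: any linear map sending a proper cone strictly into its interior is a uniform contraction in the Hilbert metric. The iterates $g^n$ drive $\overline{\Omega^+}\setminus\{E_\Theta\}$ arbitrarily deep inside $\Omega^+$, so the projective action of $dg_{x^+}$ must have simple dominant eigenvalue strictly dominating the others, yielding strong contraction at the attracting fixed point; the analogous statement at $x^-$ follows symmetrically.

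The main obstacle is the strict cone-contraction step in the two new families, where there is no classical substitute for Lusztig's canonical basis (which handles the split-real case) or the Harish-Chandra realization of the Shilov boundary (which handles Hermitian tube type). For $\SO(p,q)$, $p\neq q$, the quadratic-form viewpoint used in Section~\ref{sec:so3q} is amenable to a direct Perron--Frobenius analysis in the standard representation on $\RR^{p+q}$. For the exceptional family, however, the cone $c_{\beta_\Theta}$ is the cone of positive-definite Hermitian $3\times 3$ matrices over $\RR,\CC,\HH,\OO$, the multiplication law involves the Freudenthal product and the trilinear form, and the action of $L_\Theta^\circ$ is by Jordan-algebra automorphisms; establishing cone strict-contraction there will require either a careful case analysis or the identification of a single fundamental representation of $G$ in which $G_\Theta^{>0}$ preserves a global proper cone containing the image of $\overline{\Omega^+}$ under the highest-weight embedding.
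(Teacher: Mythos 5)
First, a point of calibration: the statement you are proving is presented in the paper as an open \emph{conjecture} --- no proof is given, and the authors explicitly identify the obstruction, namely that Lusztig's proof of proximality in the split case runs through the positivity of the canonical basis in every irreducible representation (so that $g\in G^{>0}$ acts by a totally positive matrix and Perron--Frobenius applies), and that no analogue of this is currently available for the two new families. Your proposal must therefore stand on its own, and as written it has three genuine gaps. (i) The decomposition $G_\Theta^{>0}={U^{opp}_\Theta}^{>0}\,L_\Theta^\circ\,U_\Theta^{>0}$ is asserted by analogy with Whitney's theorem, but proving it requires the full commutation relations between $U_\Theta^{>0}$ and ${U^{opp}_\Theta}^{>0}$; the $\SL(2,\RR)$ identity you cite only treats one-dimensional root spaces, and the higher-dimensional cone $c_{\beta_\Theta}$ makes this of the same order of difficulty as the braid-relation computations the paper defers to \cite{Guichard_Wienhard_pos}. (ii) Brouwer's theorem gives existence of a fixed point in $\Omega^+$, not uniqueness and not attraction; moreover the inclusion $g(\overline{\Omega^+}\setminus\{E_\Theta\})\subset\Omega^+$ rests on an unproved nesting property of positive four-tuples for the $u^-$ factor (cf.\ Remark~\ref{rem:fourtuples}), since $u^-$ moves $F_\Theta$ and so does not a priori preserve the cell defined relative to $(E_\Theta,F_\Theta)$.

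(iii) The step meant to deliver uniqueness and the spectral gap --- Birkhoff contraction in a Hilbert metric --- is precisely where the missing canonical-basis analogue reappears. Birkhoff's theorem applies to a linear map carrying a proper convex cone strictly into its interior; $\Omega^+$ is parametrized by a product of cones but is not itself a convex cone in a linear representation, so you need a representation of $G$ in which $G_\Theta^{>0}$ preserves a proper cone whose projectivization contains $\overline{\Omega^+}$ under a highest-weight embedding. You concede this for the exceptional family but treat $\SO(p,q)$ as unproblematic; it is not. Proximality on $G/P_\Theta$ for $\SO(p,q)$ concerns flags of isotropic subspaces of dimensions $1,\dots,p-1$, so even granting a Perron--Frobenius analysis in the standard representation you would need invariant proper cones in the exterior powers $\Lambda^k\RR^{p+q}$, and the existence of such cones is part of what the conjecture encodes rather than an available tool. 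In short, your outline is a reasonable reduction of the conjecture to two concrete sub-statements --- the Gauss decomposition of $G_\Theta^{>0}$ and invariant proper cones in the relevant fundamental representations --- but neither is established, so the argument does not close the conjecture.
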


\begin{conjecture}\label{conj:nilpotent}
Let $v = \sum_{\beta \in \Theta} v_\beta$, with $v_\beta \in c^\circ_\beta$. Then $\exp(v) \in U_\Theta^{>0}$. Conversely, if $v \in \mathfrak{u}_\Theta$ with $\exp(tv) \in U_\Theta^{>0}$, then $v = \sum_{\beta \in \Theta} v_\beta$ with $v_\beta \in c^\circ_\beta$. 
\end{conjecture}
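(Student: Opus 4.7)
My plan is to prove both directions using the parametrization $F_{\sigma_{i_1}\cdots\sigma_{i_l}}$ from the previous theorem as the key tool. Write $W^\circ := \bigoplus_{\beta\in\Theta} c^\circ_\beta$ and let $l$ be the length of $w_\Theta^0$.

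\textbf{Forward direction.} Given $v = \sum_{\beta\in\Theta}v_\beta\in W^\circ$, the Lie--Trotter product formula
$$\exp(v) = \lim_{n\to\infty}\Big(\prod_{\beta\in\Theta}\exp(v_\beta/n)\Big)^n$$
places $\exp(v)\in\overline{U_\Theta^{>0}}=U_\Theta^{\geq 0}$, since each factor lies in $C_\beta\subset U_\Theta^{\geq 0}$. To upgrade to the open semigroup I would construct the factorization $\exp(v) = F(w_1,\ldots,w_l)$ explicitly with $w_k\in c^\circ_{\beta_{i_k}}$. Starting from the first-order ansatz $w_k = v_{\beta_{i_k}}/m(\beta_{i_k})$ (where $m(\beta)$ counts occurrences of $\beta$ in the chosen reduced word), one solves iteratively for corrections layer by layer in the $\mathfrak{z}^*_\Theta$-weight grading using the Baker--Campbell--Hausdorff formula, checking that the resulting $w_k$'s remain in the open cones. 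Equivalently, one may identify $\partial U_\Theta^{>0}$ inside $U_\Theta^{\geq 0}$ as the vanishing locus of certain $L^\circ_\Theta$-covariant positive functionals (analogues of Fomin--Zelevinsky's generalized minors in the split case) and show that, composed with $\exp$, they give polynomials on $\mathfrak{u}_\Theta$ strictly positive on all of $W^\circ$.

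\textbf{Converse direction.} Suppose $\exp(tv)\in U_\Theta^{>0}$ for $t\in(0,\epsilon)$. Since $F$ is a diffeomorphism, there exist smooth curves $w_k(t)\in c^\circ_{\beta_{i_k}}$ with $w_k(0)=0$ and $\exp(tv)=\prod_k x_{\beta_{i_k}}(w_k(t))$. Projecting the identity $tv = \log\exp(tv)$ onto $\mathfrak{u}_\gamma$ for any weight $\gamma\notin\Theta$ shows the left side is $O(t)$ while the right side is $O(t^2)$ (deeper-weight contributions via BCH arise only from commutators of $O(t)$ curves), forcing $v\in\bigoplus_{\beta\in\Theta}\mathfrak{u}_\beta$. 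Projecting onto $\mathfrak{u}_\beta$ for $\beta\in\Theta$ exploits that an indecomposable weight admits no nontrivial BCH commutator contribution---$\beta$ cannot be written as a sum of two positive $\mathfrak{z}^*_\Theta$-weights---yielding the exact linear identity
$$tv_\beta = \sum_{k:\beta_{i_k}=\beta} w_k(t) \qquad \text{for all }t\in(0,\epsilon).$$
If $v_\beta\in\partial c_\beta$, choose by Hahn--Banach a nonzero linear functional $\lambda$ on $\mathfrak{u}_\beta$ nonnegative on $c_\beta$ with $\lambda(v_\beta)=0$; then $0=t\lambda(v_\beta)=\sum_k\lambda(w_k(t))$ contradicts the strict positivity of $\lambda$ on the interior of the sharp convex cone $c_\beta$. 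The degenerate case $v_\beta = 0$ reduces similarly: $\beta$ must appear in every reduced expression of $w_\Theta^0$, so $\sum_k w_k(t)=0$ with each $w_k(t)\in c^\circ_\beta$ is impossible in a sharp cone. Hence $v_\beta\in c^\circ_\beta$ for every $\beta\in\Theta$.

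The main obstacle lies in the forward direction---the explicit BCH factorization of $\exp(v)$, equivalently the strict positivity of the relevant functionals on $\exp(W^\circ)$. In the split case this is built into Lusztig's canonical basis theory, but no such canonical basis is available for the three other families in Theorem~\ref{thm:classification}. The required positivity must therefore be established by direct computation in the universal enveloping algebra of $\mathfrak{u}_\Theta$, almost certainly as an extension of the braid-relation techniques (covering the length-$3$, $4$, and $6$ braid relations of the Weyl groups $W(\Theta)$ arising in the classification) that already underpin the proof of the parametrization theorem for $F$.
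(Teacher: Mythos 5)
The paper does not prove this statement: it is stated as a conjecture, and the only evidence offered is the explicit verification of the forward direction for $G=\SO(3,q)$ in Section~\ref{sec:so3q}, where $\exp(a+w)=F_{\sigma_2\sigma_1\sigma_2\sigma_1}(\tfrac13 w,\tfrac34 a,\tfrac23 w,\tfrac14 a)$ is exhibited directly. So there is no paper proof to match your proposal against; the question is whether your argument closes the conjecture, and it does not.

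The gap is exactly where you locate it, but it is worth being precise about why your forward direction is not merely ``details to be filled in.'' The Lie--Trotter step only places $\exp(v)$ in the closure of $U_\Theta^{\geq 0}$, and the identity $\overline{U_\Theta^{>0}}=U_\Theta^{\geq 0}$ is itself unestablished (in Lusztig's split setting this already requires the canonical-basis positivity you correctly note is unavailable here). The iterative BCH scheme starting from $w_k=v_{\beta_{i_k}}/m(\beta_{i_k})$ produces \emph{some} solution of the factorization equations layer by layer in the weight grading, but nothing in the scheme controls whether the corrected $w_k$ stay in the \emph{open} cones $c^\circ_{\beta_{i_k}}$ --- and the corrections are genuinely nontrivial: in the $\SO(3,q)$ formula above the exact coefficients are $(\tfrac13,\tfrac34,\tfrac23,\tfrac14)$, not $(\tfrac12,\tfrac12,\tfrac12,\tfrac12)$, and the fact that each factor remains a positive multiple of $v_\beta$ (hence trivially in the cone) is special to that case. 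Establishing the needed positivity for the $F_4$-type family, where the cone is a cone of positive definite Hermitian $3\times 3$ matrices over $\RR,\CC,\HH,\OO$ and the relevant braid relation has length six, is the open content of the conjecture. By contrast, your converse direction is essentially sound and is the more valuable part of the proposal: the exact linear identity $tv_\beta=\sum_{k:\beta_{i_k}=\beta}w_k(t)$ for indecomposable $\beta$ is correct, and convexity of the open cone already gives $v_\beta\in c^\circ_\beta$ without Hahn--Banach; the only points to shore up are that $F$ restricted to the product of open cones is a homeomorphism onto $U_\Theta^{>0}$ (only bijectivity is asserted), and that $w_k(t)=O(t)$ --- which you should derive from the linear identity together with a strictly positive functional on the sharp cone (giving $\lVert w_k(t)\rVert\leq C\,\lambda(tv_\beta)$), rather than from an assumed smoothness of $w_k$ at $t=0$, where $\exp(tv)$ degenerates to the boundary of the semigroup.
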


From  the second conjecture  we can deduce the following statement. 

\begin{conjecture}\label{conj:sl2}
Let $\mathfrak{g}_0  = \langle e, f, h\rangle  \subset  \mathfrak{g}$ be a three dimensional simple Lie algebra where $e$ denotes the nilpotent element.  
If  $e = \sum_{\beta\in \Theta} v_\beta$ with $ v_\beta \in c_\beta^\circ$ for all $\beta \in \Theta$, then the totally geodesic embedding $\HH^2 \to G/K$ extends to a continuous equivariant  map $\RR\PP^1 = \partial \HH^2 \to  G/P_\Theta$, which sends positive triples in $\RR\PP^1$ to positive triples in $G/P_\Theta$. 
\end{conjecture}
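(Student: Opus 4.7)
The plan is to build the equivariant boundary map from the $\mathfrak{sl}_2$-embedding and then, using Conjecture~\ref{conj:nilpotent} together with a Kostant-type structural lemma about the Weyl element, check that the image of $(0,1,\infty)$ is a $\Theta$-positive triple; the general case then follows by sharp transitivity of $\mathrm{PSL}(2,\RR)$ on positive triples of $\RR\PP^1$ and $G$-invariance of $\Theta$-positivity. First, integrate $\mathfrak{g}_0 = \langle e, f, h\rangle \hookrightarrow \mathfrak{g}$ to a Lie group homomorphism $\phi: \mathrm{PSL}(2,\RR) \to G$ (going through $\mathrm{SL}(2,\RR)$ and using that $-I$ acts trivially on $G/P_\Theta$, if necessary). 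Since $e \in \mathfrak{u}_\Theta \subset \mathfrak{p}_\Theta$ and $h = [e,f] \in \mathfrak{l}_\Theta \subset \mathfrak{p}_\Theta$, the Lie algebra $\langle h, e\rangle$ of the upper Borel $B^+$ lies in $\mathfrak{p}_\Theta$, so $\phi(B^+) \subset P_\Theta$ by connectedness; quotienting yields a continuous $\mathrm{PSL}(2,\RR)$-equivariant map
\[
\phi_\partial: \RR\PP^1 = \mathrm{PSL}(2,\RR)/B^+ \longrightarrow G/P_\Theta, \qquad gB^+ \longmapsto \phi(g) \cdot F_\Theta.
\]
After a standard Kostant--Jacobson--Morozov adjustment arranging $\phi(\mathrm{PSO}(2)) \subset K$, $\phi$ induces the given totally geodesic embedding $\iota: \HH^2 \to G/K$, and the identification of $\phi_\partial$ with its Furstenberg boundary extension is routine: both maps are $\mathrm{PSL}(2,\RR)$-equivariant, and horospherical limits of the $\phi(N)$-orbit of $\iota(K)$ in the maximal Satake compactification of $G/K$ land at $F_\Theta$.

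The structural lemma I would establish is that, under the hypothesis $e = \sum_{\beta\in\Theta} v_\beta$ with $v_\beta \in c_\beta^\circ$, the element $w := \phi(s) \in G$ (for $s$ the standard Weyl generator of $\mathrm{PSL}(2,\RR)$) conjugates $P_\Theta$ onto $P^{opp}_\Theta$; equivalently, $\phi_\partial(0) = w \cdot F_\Theta$ coincides with $E_\Theta$ in the natural identification. This is the $\Theta$-positive analogue of Kostant's classical identification of the principal-$\mathfrak{sl}_2$ Weyl element with the longest Weyl element in the split case; here it identifies $w$ with the longest element of $W(\Theta)$ lifted to $N_G(L_\Theta)$. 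The proof uses that $\mathrm{Ad}(w)$ is an involution sending $e \in \mathfrak{u}_\Theta$ to $-f \in \mathfrak{u}^{opp}_\Theta$ and negating $h$, together with the defining relations of $W(\Theta)$ and the $L_{\Theta,\circ}$-invariance of the $\mathfrak{z}_\Theta$-grading, to conclude that $\mathrm{Ad}(w)$ swaps $\mathfrak{u}_\Theta \leftrightarrow \mathfrak{u}^{opp}_\Theta$ while preserving $\mathfrak{l}_\Theta$. Granting this, the image of the test triple $(0,1,\infty)$ becomes $(E_\Theta, \exp(e) \cdot E_\Theta, F_\Theta)$, and Conjecture~\ref{conj:nilpotent} applied to $e = \sum v_\beta$ yields $\exp(e) \in U_\Theta^{>0}$, which is exactly the defining condition for $\Theta$-positivity of this triple.

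The main obstacle is of course Conjecture~\ref{conj:nilpotent} itself, the Lie-algebraic input governing how the interior cones $c_\beta^\circ$ combine under exponentiation; once it is granted, sharp transitivity and $\mathrm{PSL}(2,\RR)$-equivariance of $\phi_\partial$ finish the argument. A secondary but nontrivial point is the structural lemma identifying $\phi(s)$ with the longest-Weyl-element lift in $N_G(L_\Theta)$: this $\Theta$-positive analogue of Kostant's theorem requires dedicated work but should go through by an inductive argument on $W(\Theta)$ that mirrors the classical split-principal computation.
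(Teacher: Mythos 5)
The paper does not actually prove this statement: it is presented as a conjecture, and the only guidance given is the sentence immediately preceding it, ``From the second conjecture we can deduce the following statement,'' i.e.\ the authors view Conjecture~\ref{conj:sl2} as a consequence of Conjecture~\ref{conj:nilpotent} (with the split and Hermitian cases referred to Lusztig and to Burger--Iozzi--Wienhard respectively). Your proposal is a plausible elaboration of exactly that deduction, and your identification of $\exp(e)\in U_\Theta^{>0}$ via Conjecture~\ref{conj:nilpotent} as the crux is the same reduction the paper has in mind. Two points deserve flagging. First, the argument is doubly conditional: besides Conjecture~\ref{conj:nilpotent} you need your ``structural lemma'' that $\phi(s)$ carries $F_\Theta$ to $E_\Theta$; this amounts to showing that the semisimple element $h$ of the triple can be taken to be the grading element of $\mathfrak{p}_\Theta$ (i.e.\ $\beta(h)=2$ for $\beta\in\Theta$ and $\alpha(h)=0$ for $\alpha\in\Delta\setminus\Theta$), which must use the genericity of $v_\beta\in c_\beta^\circ$ and is not automatic from $e\in\mathfrak{u}_\Theta$ alone, since $[h,e]=2e$ by itself does not force $\alpha(h)=0$ on $\Delta\setminus\Theta$. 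Second, the closing step ``sharp transitivity of $\mathrm{PSL}(2,\RR)$ on positive triples plus $G$-invariance of $\Theta$-positivity'' hides the issue raised in Remark~\ref{rem:fourtuples} and in the paper's earlier remark on $\GL(2,\RR)$: positivity of triples in $G/P_\Theta$ is defined relative to the base pair $(E_\Theta,F_\Theta)$ and extended by the $G$-action, so one must check that the stabilizer $L_\Theta$ of that pair preserves (rather than permutes) the distinguished connected component, or else work with $L_\Theta^\circ$ and a consistent choice of cones $c_\beta^\circ$ versus $-c_\beta^\circ$. Neither point is fatal, and both fall under the ``dedicated work'' you acknowledge; as a conditional reduction to Conjecture~\ref{conj:nilpotent} your outline is sound and consistent with the route the paper indicates without carrying out.
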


In the case of Lusztig's total positivity these properties are proven in \cite{Lusztig}. For Hermitian Lie groups of tube type the properties can be checked directly and Conjecture~\ref{conj:sl2} can be deduced from \cite{Burger_Iozzi_Wienhard_tight}. 

\subsection{The positive structure for $\SO(3,q)$}\label{sec:so3q}
Here we describe in detail the $\Theta$-positive structure for the group $SO(3,q)$, $q>3$. The general case of $SO(p,q)$, $p\neq q$ essentially reduces to the computations in this subsection and the computations for $\SL(3,\RR)$. 

Let $Q$ be a quadratic form of signature $(3,q)$. We write $Q = \begin{pmatrix} 0& 0& K\\ 0 &J &0 \\ -K&0& 0 \end{pmatrix}$, 
where $K= \begin{pmatrix} 0& 1\\ -1&0 \end{pmatrix}$, and $J = \begin{pmatrix} 0&0& 1\\ 0& -Id_{q-3} &0\\ 1 & 0& 0 \end{pmatrix}$, so that $Q(a,b) = a^t Q b$. 
We set $b_J  (v,w):= \frac{1}{2} v^t J w$, $q_J (v) = b_J(v,v)$.

Consider $G = \SO(Q) = SO(3,q)$. 
We choose the Cartan subspace $\mathfrak{a}$ to be the set of diagonal matrices $diag(\lambda_1, \lambda_2, \lambda_3, 0, \cdots,0, -\lambda_3, -\lambda_2, -\lambda_1)$. 
We denote by $\epsilon _i: \mathfrak{a} \to \RR$ the linear form which sends such a diagonal matrix to $\lambda_i$. 
We choose a set of simple roots $\Delta= \{\alpha_1, \alpha_2, \alpha_3\}$ with $\alpha_i = \epsilon_i - \epsilon_{i+1}$, $i = 1,2$, $\alpha_3 = \epsilon_3$, and take $\Theta = \{\alpha_1, \alpha_2\}$. 

The generalized flag variety $G/P_\Theta$ is 
$$
\mathcal{F}_{1,2} =\Big \{ V_1 \subset V_2 \, |\, \dim(V_i) = i, Q|_{V_2\times V_2} = 0\Big\} 
$$
We consider $F = (F_1, F_2)$ with $F_1 = \RR e_1$ and $F_2 = F_1 \oplus \RR e_2$, and $E= (E_1, E_2)$ with $E_1  = \RR e_{q+3}$, and $E_2 = E_1 \oplus\RR e_{q+2}$. 

Then $P_\Theta$ is the stabilizer of $F$, and its unipotent subgroup is 
$$
U_\Theta = \{ U(x,v,w,a) \, |\, a,x \in \RR, v, w \in \RR^{q-1}\}, $$
where
$$
U(x,v,w,a) = 
\begin{pmatrix} 
1& x& w^t+ x \frac{v^t}{2}& a & au - q_J(w+\frac{v}{2})\\
0& 1& v^t& q_J(v)& a - 2 b_J(v,w)\\
0&0&Id_{q-1}& J v &-J w+ x J \frac{v}{2}\\
0&0&0&1&x\\
0&0&0&0&1
\end{pmatrix}
$$

Here $\mathfrak{u}_{\alpha_1}$ is equal to $\RR$ and $\mathfrak{u}_{\alpha_2}$ is equal to  $\RR^{q-1}$, endowed with the form $q_J$. 
The maps $x_{\alpha_1}:  \RR \to U_\Theta$ and $x_{\alpha_2} :\RR^{q-1} \to U_\Theta$ are given by 
$$
x_{\alpha_1}(x) = \exp \begin{pmatrix} 
0& x& 0& 0 & 0\\
0& 0& 0& 0&0\\
0&0&0& 0&0\\
0&0&0&0&x\\
0&0&0&0&0
\end{pmatrix} = \begin{pmatrix} 
1& x& 0& 0& 0\\
0& 1& 0& 0&0\\
0&0&Id_{q-1}&0& 0\\
0&0&0&1&x\\
0&0&0&0&1
\end{pmatrix}
$$

$$
x_{\alpha_2}(v) = \exp \begin{pmatrix} 
0& 0& 0& 0 & 0\\
0& 0& v^t& 0&0\\
0&0&0& J v&0\\
0&0&0&0&0\\
0&0&0&0&0
\end{pmatrix} = \begin{pmatrix} 
1& 0& 0& 0& 0\\
0& 1& v^t& q_J(v)&0\\
0&0&Id_{q-1}&J v& 0\\
0&0&0&1&0\\
0&0&0&0&1
\end{pmatrix}
$$

The cone $c^\circ_{\alpha_1}$ is equal to $\RR_+$, and the cone $ c^\circ_{\alpha_2}$ is equal to $\{ v\in \RR^{q-1} \, |\, q_J(v) >0, \mathrm{sign}(v_1)>0\}$.

The group $W(\Theta)$ is isomorphic to the Weyl group $W_{B_2}$ of the root sytem $B_2$. 
Let $\omega^0_{\Theta}=\sigma_1\sigma_2\sigma_1\sigma_2 = \sigma_2\sigma_1\sigma_2\sigma_1$ be the longest element in $W(\Theta)$.   
Considering the images of the two maps  $F_{\sigma_1\sigma_2\sigma_1\sigma_2 } (x_1, v_1, x_2, v_2) $ and $F_{\sigma_2\sigma_1\sigma_2\sigma_1} (w_1, y_1, w_2, y_2)$, with $x_1, x_2, y_1, y_2 \in \mathfrak{u}_{\alpha_1} \cong \RR$ and $v_1, v_2, w_1, w_2 \in \mathfrak{u}_{\alpha_2} \cong \RR^{1,q-2}$, we want to understand when 
$$x_{\alpha_1}(x_1) x_{\alpha_2}(v_1) x_{\alpha_1}(x_2) x_{\alpha_2}(v_2) = x_{\alpha_2}(w_1) x_{\alpha_1}(y_1) x_{\alpha_2}(w_2)x_{\alpha_1}(y_2). $$
Comparing the entries of the matrices we get the following set of equations. 
\begin{enumerate}
\item $ x_1 + x_2 = y_1 + y_2$
\item $v_1 + v_2 = w_1 + w_2$
\item $x_1(v_1 + v_2) + x_2 v_2 = y_1 w_2$, which is equivalent to $x_2v_1 = y_1 w_1 + y_2 (w_1+w_2)$, which is equivalent to $y_1w_2 + x_2v_1 = (y_1 + y_2)(w_1+w_2) = (x_1+x_2) (v_1+v_2)$
\item $y_1 q_J(w_2) = x_1 q_J(v_1+v_2) + x_2 q_J(v_2)$, which is equivalent to $y_1q_J(w_1) + y_2 q_J(w_1+w_2) = x_2 q_J(v_1)$
\end{enumerate} 

An explicit solution is given by 

\begin{enumerate}
\item $ y_1 = \frac{q_J(x_1(v_1+v_2) + x_2 v_2)}{x_1 q_J(v_1+v_2) + x_2 q_J(v_2)}$
\item $y_2 = \frac{x_1x_2 q_J(v_1)}{x_1 q_J(v_1+v_2) + x_2 q_J(v_2)} $
\item $w_2 = \frac{x_1 q_J(v_1+v_2) + x_2 q_J(v_2)}{q_J(x_1(v_1+v_2) + x_2 v_2)} (x_1(v_1+v_2) + x_2 v_2)$
\item $w_1= \frac{1}{q_J(x_1(v_1+v_2) + x_2 v_2)} ((x_1x_2 q_J(v_1+v_2)  + x_2^2 q_J(v_2))v_1 - x_1x_2 q_J(v_1)(v_1+v_2))$
\end{enumerate} 

It can be easily checked that these equations are well defined as soon as $x_1\geq 0$, $v_1 \in c_{\alpha_2} $, $x_2>0$ and $v_2 \in c^\circ_{\alpha_2}$. 
Moreover, if $x_1, x_2>0$, $v_1, v_2 \in c^\circ_{\alpha_2} $, then $y_1, y_2>0$, $w_1, w_2 \in c^\circ_{\alpha_2} $. (The only thing which is not immediate from the formulas and needs to be checked  is that $q_J(w_1)$ is always positive). 
In particular, the $\Theta$-positive semigroup $U_\Theta^{>0}$ is well defined. 

One can analyze the structure of $U_\Theta^{>0}$ and $U_\Theta^{\geq 0}$ more closely, and give a parametrization of $U_\Theta^{\geq 0}$ as the disjoint union of images of 16 maps, 
where every map is associated to reduced word in the Weyl group $W(\Theta)$. For details we refer the reader to \cite{Guichard_Wienhard_pos}.

With the above parametrization of $U_\Theta^{>0}$ one can also check directly that Conjecture~\ref{conj:nilpotent} is satisfied in this case: 
If $v = a + w \in \mathfrak{u}_{\alpha_1} \oplus \mathfrak{u}_{\alpha_2}$, with $a \in \RR_+$, and $w \in c^\circ_{\alpha_2} $, then $\exp(v) = F_{\sigma_2\sigma_1\sigma_2\sigma_1} (\frac{1}{3} w, \frac{3}{4} a, \frac{2}{3} w, \frac{1}{4}a ) \in U_\Theta^{>0}$.

\section{Positivity and Higher Teichm\"uller theory}
 Lusztig's total positivity and the positivity in Hermitian Lie group played an important role in recent developments in geometry and low dimensional topology, which led to what now is often called higher Teichm\"uller theory. 

Let $\Sigma_{g}$ be an oriented topological surface of genus $g \geq 2$, and let $\pi_1(\Sigma_g)$ be its fundamental group.\footnote{We restrict to the case of closed surface to simplify the discussion. For surfaces with punctures or boundary components appropriate analogous statements hold. }
Then the boundary $\partial \pi_1(\Sigma_{g}) $ naturally identifies with $\RR\PP^1$, and $\pi_1(\Sigma_g)$ acts on $\RR\PP^1$ preserving the orientation. 
%
%
%

The Teichm\"uller space $\mathcal{T} (\Sigma_g)$ of $\Sigma_g$ is the moduli space of marked conformal structures on $\Sigma_g$. By the Uniformization theorem, any such conformal structure can be realized by a unique hyperbolic structure, i.e.\ a unique metric of constant curvature $-1$. The holonomy of this hyperbolic structure gives a representation $\rho: \pi_1(\Sigma_g) \to \mathrm{PSL}(2,\RR)$, which is faithful with discrete image. In turn any faithful and discrete representation $\rho: \pi_1(\Sigma_g) \to \mathrm{PSL}(2,\RR)$ induces a hyperbolic structure on $\Sigma_g$, identifying $\Sigma_g$ with the quotient of the hyperbolic plane $\HH^2$ by $\rho(\pi_1(\Sigma_g))$. 

The subset of discrete and faithful representations 
$$\mathrm{Hom}_{df}(\pi_1(\Sigma_g), \mathrm{PSL}(2,\RR))/\mathrm{PSL}(2,\RR) \subset \mathrm{Hom}(\pi_1(\Sigma_g), \mathrm{PSL}(2,\RR))/\mathrm{PSL}(2,\RR)$$
is a union of two connected components, each of which is homeomorphic to the Teichm\"uller space of $\Sigma_g$. Representations in one component induce hyperbolic structures on $\Sigma_g$ with the same orientation as the given one, the representations in the other induce the opposite orientation on $\Sigma_g$. 
In particular, there is a connected component of the representation variety 
$\mathrm{Hom}(\pi_1(\Sigma_g), \mathrm{PSL}(2,\RR))/\mathrm{PSL}(2,\RR)$, which consists entirely of discrete and faithful representations, and identifies with the Teichm\"uller space $\mathcal{T} (\Sigma_g)$ \cite{Weil}. 

In the past 25 years, for two families of Lie groups $G$ -- split real forms and Hermitian type -- new connected components in $ \mathrm{Hom}(\pi_1(\Sigma_g), G)/G$ have been discovered, which also consist entirely of discrete and faithful representations and share several properties with representations in $\mathrm{Hom}_{df}(\pi_1(\Sigma_g), \mathrm{PSL}(2,\RR))/\mathrm{PSL}(2,\RR)$. These connected components are called {\em higher Teichm\"uller spaces}. 

The two known families of higher Teichm\"uller spaces are the {\em Hitchin components}, which are defined when $G$ is a split real semisimple Lie group, and the {\em space of maximal representations}, which are defined when $G$ is a Lie group of Hermitian type. These spaces have been defined and investigated by very different methods. It turned out that these two families in fact share many features, and in fact both families can be characterized in terms of positive structures. 

\subsection{Hitchin components}
The Hitchin component has originally been defined by Hitchin using methods from the theory of Higgs bundles \cite{Hitchin}. We do not give the original definition here, but a more elementary one. 
We assume that $G$ is an adjoint split real semisimple Lie group. 
Let $\mathfrak{g}_0$ be a principal three dimensional subalgebra of $\mathfrak{g}$ \cite{Kostantsl2}.  
Let $\pi_p: \mathrm{SL}(2,\RR) \to G$ be the associated Lie group homomorphism. Fix any discrete and faithful embedding $\iota: \pi_1(\Sigma_g) \to \mathrm{SL}(2,\RR)$. 

Set $\rho_H := \pi_p \circ \iota: \pi_1(\Sigma_g) \to G$. The Hitchin component 
$$
\mathcal{T}_H(\Sigma_g, G)\subset   \mathrm{Hom}(\pi_1(\Sigma_g), G)/G
$$
is defined to be the connected component of $\mathrm{Hom}(\pi_1(\Sigma_g), G)/G$ containing $\rho_H$. Note that $\mathcal{T}_H(\Sigma_g, \mathrm{PSL}(2,\RR)) = \mathcal{T} (\Sigma_g)$.

Hitchin proved that $\mathcal{T}_H(\Sigma_g, G) \cong \RR^{(2g-2) dim(G)}$. 
Labourie, and Fock and Goncharov \cite{Labourie_anosov, Fock_Goncharov} established several interesting geometric properties for representations in the Hitchin component, we just mention a few: 
\begin{enumerate}
\item Every representation in $\mathcal{T}_H(\Sigma_g, G)$ is discrete and faithful. 
\item For every $\gamma \in \pi_1(\Sigma_g) - \{ id\}$ the image $\rho(\gamma)$ is diagonalizable with distinct eigenvalues of the same sign. 
\item Every representation in $\mathcal{T}_H(\Sigma_g, G)$ is Anosov with respect to the Borel subgroup $B = P_\Delta$.\footnote{We refer the reader to \cite{Labourie_anosov, Guichard_Wienhard_DoD} for the definition of Anosov representations.}
\end{enumerate}

In particular Labourie, Guichard, Fock and Gonachrov established the following characterization. 
\begin{theorem}\cite{Labourie_anosov, Fock_Goncharov, Guichard_hitchin}
Let $\rho: \pi_1(\Sigma_g) \to G$ be a representation. Then $\rho \in \mathcal{T}_H(\Sigma_g, G)$ if and only if there exists a continuous $\rho$-equivariant map 
$\xi: \RR\PP^1 \to G/B$ which sends positive triples in $\RR\PP^1$ to positive triples in $G/B$, where the notion of positivity of triples in $G/B$ is given by Lusztig's total positivity (see Section~\ref{sec:triples}).  
\end{theorem}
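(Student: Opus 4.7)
The plan is to prove both directions by linking positive equivariant boundary maps with the Anosov dynamical characterization of Hitchin representations, and then to use continuity and connectedness in the representation variety.

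For the forward direction, I would start at the Fuchsian basepoint $\rho_H = \pi_p \circ \iota$. The principal $\mathfrak{sl}_2$-embedding $\pi_p$ induces a $\rho_H$-equivariant continuous map $\xi_H\colon \RR\PP^1 \to G/B$ extending the totally geodesic embedding $\HH^2 \to G/K$. In the split case $\Theta = \Delta$, $P_\Theta = B$, and the principal nilpotent decomposes (up to scale) as $\sum_{\beta \in \Delta} v_\beta$ with each $v_\beta$ in the open ray $c_\beta^\circ = \RR_+$, so the split-group instance of Conjecture~\ref{conj:sl2}, known from \cite{Lusztig}, shows that $\xi_H$ sends positive triples in $\RR\PP^1$ to positive triples in $G/B$. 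I would then propagate this property to all of $\mathcal{T}_H(\Sigma_g, G)$ by an open-and-closed argument on the Anosov locus. Openness: positive triples form an open subset of $(G/B)^3$, and for an Anosov representation the equivariant boundary map depends continuously on $\rho$, so small perturbations of $\rho_H$ keep positivity. Closedness: limits of Anosov representations remain Anosov, their boundary maps converge, and a limit of positive triples that remains transverse stays in the same connected component of $\Omega_{E}\cap \Omega_{F}$ by the semigroup structure recalled in Section~\ref{sec:triples}. Since $\mathcal{T}_H(\Sigma_g, G)$ is connected and contains $\rho_H$, the positive boundary map exists for every Hitchin representation.

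For the reverse direction, suppose $\rho$ admits a continuous $\rho$-equivariant map $\xi\colon \RR\PP^1 \to G/B$ sending positive triples to positive triples. The crucial first step is to show that $\rho$ is $B$-Anosov. Positivity forces $\xi$ to be transverse: distinct points of $\RR\PP^1$ go to transverse flags. Next, for $\gamma \in \pi_1(\Sigma_g)\setminus\{1\}$, the attracting/repelling dynamics of $\gamma$ on its boundary fixed points transports via $\xi$ to the dynamics of $\rho(\gamma)$ on $G/B$; using that $\rho(\gamma)$ behaves like a positive element of $G$ (its action preserves a positive ``triangle'' bounded by the two fixed flags and one intermediate positive flag), the proximality property of positive elements on $G/B$, proved in the split case by Lusztig, yields a unique attracting fixed point with uniform contraction on tangent directions. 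Standard arguments then upgrade this pointwise contraction to the uniform contraction along the geodesic flow required by the Anosov definition. Once $\rho$ is known to be Anosov, the locus of representations admitting a positive equivariant boundary map is both open and closed inside the Anosov locus by the same arguments used in the forward direction, and is therefore a union of connected components of $\mathrm{Hom}(\pi_1(\Sigma_g), G)/G$; since it contains $\rho_H$, its component must equal $\mathcal{T}_H(\Sigma_g, G)$.

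The hardest step is extracting uniform hyperbolic Anosov dynamics from the purely algebraic positivity condition: one must translate positivity of triples and the semigroup structure on $U^{>0}$ into proximality together with quantitative contraction estimates for $\rho(\gamma)$ on $G/B$. In the split case these estimates are accessible via Lusztig's explicit control over the action of $U^{>0}$ on the flag variety and the positive rational nature of the coordinate changes between reduced expressions, but reassembling them into a uniform contraction along the geodesic flow is the technical heart of the proof, to which \cite{Labourie_anosov}, \cite{Fock_Goncharov}, and \cite{Guichard_hitchin} each contribute complementary arguments.
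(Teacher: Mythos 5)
This theorem is stated in the paper as a citation to \cite{Labourie_anosov, Fock_Goncharov, Guichard_hitchin}; the paper itself contains no proof, so your attempt can only be measured against the arguments in those references. Your overall architecture (establish positivity at the Fuchsian basepoint via the principal $\mathfrak{sl}_2$, then run an open--closed argument; conversely extract Anosov dynamics from positivity) is the right general shape and does reflect the strategy of the cited works. But there are two genuine gaps. First, your closedness step rests on the claim that \emph{limits of Anosov representations remain Anosov}. This is false: the Anosov condition is open but not closed in $\Hom(\pi_1(\Sigma_g),G)/G$, and the entire point of the positivity framework is that positivity supplies the missing compactness. The correct argument shows that a limit of positive boundary maps is still a positive (in particular transverse and injective) boundary map, and the nontrivial content is precisely that positivity of configurations survives the limit without the flags degenerating onto one another; this uses the semigroup structure and the nesting of positive cells for positive quadruples (cf.\ Remark~\ref{rem:fourtuples}), not a general closedness of the Anosov locus. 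As written, your closedness argument is circular.

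Second, in the reverse direction, knowing that the set $P$ of representations admitting a positive equivariant boundary map is open and closed only tells you that $P$ is a union of connected components; it does not tell you that the component containing your given $\rho$ is $\mathcal{T}_H(\Sigma_g,G)$ rather than some other component of $P$. To close this you need the connectedness of $P$ itself (or a direct path from $\rho$ to the Fuchsian locus), which in \cite{Fock_Goncharov} comes from the explicit parametrization of positive framed local systems by a cell $(\RR_+)^N$, and in \cite{Guichard_hitchin} from a deformation argument for hyperconvex curves. This connectivity input is absent from your proposal and cannot be recovered from the open--closed formalism alone. Your reduction of the Anosov property of a positive representation to proximality of $\rho(\gamma)$ plus ``standard arguments'' is also where the quantitative work lives, as you note, but that part is at least honestly flagged; the two gaps above are the ones that would make the proof fail as written.
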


We refer the reader to \cite{Labourie_crossratio, Labourie_McShane, Lee_Zhang, Guichard_Wienhard_duke, Guichard_Wienhard_DoD, Bridgeman_Canary_Labourie_Sambarino,Potrie_Sambarino} for more geometric and dynamical properties of Hitchin representations.

\subsection{Maximal representations}
The space of maximal representations is defined when $G$ is a Lie group of Hermitian type. In this case there is a characteristic number 
$$
\tau: \mathrm{Hom}(\pi_1(\Sigma_g), G)/G \to \frac{1}{e_G}\ZZ, 
$$
which is bounded in absolute value by $|\tau(\rho)| \leq |\chi(\Sigma_g)|  \mathrm{rk}(G)$, where $\mathrm{rk}(G)$ denotes the real rank of $G$, and $e_G$ is an integer depending on $G$. 

Maximal representations are those that saturate the upper bound: 
$$
\mathcal{T}_m(\Sigma_g, G) = \tau^{-1} (|\chi(\Sigma_g)|  \mathrm{rk}(G)). 
$$

Since $\tau$ is an integer-valued continuous map on $\mathrm{Hom}(\pi_1(\Sigma_g), G)/G$ it is immediate that $\mathcal{T}_m(\Sigma_g, G)$ is a union of connected components. 
Goldman proved that $\mathcal{T}_m(\Sigma_g, \mathrm{PSL}(2,\RR)) = \mathcal{T} (\Sigma_g)$ \cite{Goldman_thesis, Goldman_components}.

Maximal representations have several interesting properties, which have been proved in \cite{Burger_Iozzi_Wienhard_toledo, Burger_Iozzi_Labourie_Wienhard}
\begin{enumerate}
\item Any representation in $ \mathcal{T}_m(\Sigma_g, G)$ is discrete and faithful. 
\item Every representation in $\mathcal{T}_m(\Sigma_g, G)$ is Anosov with respect to the parabolic subgroup $ P_\Theta$, where $P_\Theta$ is the parabolic subgroup stabilizing a point in the Shilov boundary.  
\item For every $\gamma \in \pi_1(\Sigma_g) - \{ id\}$ the image $\rho(\gamma)$ as a unique attracting and a unique repelling fix point in the Shilov boundary; they are transverse to each other.  
\end{enumerate}

Maximal representations into Lie groups of Hermitian type which are not of tube type satisfy a rigidity theorem, which essentially reduces their study to the case of maximal representations into Hermitian symmetric spaces of tube type, \cite{Toledo, Hernandez, Burger_Iozzi_Wienhard_toledo}. 

So we assume now without loss of generality that $G$ is of tube type in order to simplify the statement of the characterization of maximal representations in terms of positivity: 

 \begin{theorem}\cite{Burger_Iozzi_Wienhard_toledo}
Let $\rho: \pi_1(\Sigma_g) \to G$ be a representation. Then $\rho \in \mathcal{T}_m(\Sigma_g, G)$ if and only if there exists a continuous $\rho$-equivariant map 
$\xi: \RR\PP^1 \to \check{S} = G/P_\Theta$ which sends positive triples in $\RR\PP^1$ to positive triples in the Shilov boundary $\check{S} = G/P_\Theta$, where the notion of positivity of triples in $\check{S}$ is given by the generalized Maslov index (see Section~\ref{sec:maslov}).
\end{theorem}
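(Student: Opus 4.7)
The plan is to prove the two implications separately, realizing the Toledo invariant as a bounded cohomology class and using equivariant boundary maps on each side.

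For the forward direction, I would use the fact recalled just above that every $\rho \in \mathcal{T}_m(\Sigma_g, G)$ is $P_\Theta$-Anosov, which yields a continuous $\rho$-equivariant map $\xi \colon \partial \pi_1(\Sigma_g) \cong \RR\PP^1 \to \check{S} = G/P_\Theta$ with pairwise transverse image. The Toledo invariant $\tau(\rho)$ is defined by evaluation on the fundamental class $[\Sigma_g]$ of the pullback $\rho^* \kappa_G^b$ of the bounded K\"ahler class $\kappa_G^b \in H^2_{cb}(G;\RR)$. By the work of Burger--Iozzi, $\kappa_G^b$ is represented in continuous bounded cohomology by the (appropriately normalized) generalized Maslov cocycle on $\check{S}^3$, using amenability of $P_\Theta$ and the interpretation of $\check{S}$ as a Poisson boundary. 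This yields an integral formula expressing $\tau(\rho)$ as an average of $\mu(\xi(x),\xi(y),\xi(z))$ over positively oriented triples $(x,y,z)$ in $\RR\PP^1$. Since $|\mu|\le \mathrm{rk}(G)$ pointwise and $\tau(\rho)=|\chi(\Sigma_g)|\mathrm{rk}(G)$ by hypothesis, the integrand must be identically $\mathrm{rk}(G)$; continuity of $\xi$ promotes this to every positively oriented triple, proving that $\xi$ sends positive triples in $\RR\PP^1$ to positive triples in $\check{S}$.

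For the converse, given a continuous $\rho$-equivariant $\xi$ with the positivity property, I would use the same representation of $\kappa_G^b$ by the Maslov cocycle and pull back through $\xi$ to obtain the same integral formula for $\tau(\rho)$. The positivity hypothesis forces the integrand to be identically $\mathrm{rk}(G)$, so $\tau(\rho)=|\chi(\Sigma_g)|\mathrm{rk}(G)$ and $\rho\in\mathcal{T}_m(\Sigma_g,G)$.

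The main obstacle is establishing the integral formula linking $\tau(\rho)$ to the Maslov indices of image triples. This requires the bounded cohomology machinery of Burger--Iozzi: one must identify $H^2_{cb}(G;\RR)$ with an explicit space of alternating essentially bounded cocycles on $\check{S}$ using amenability of $P_\Theta$, verify that the image of $\kappa_G^b$ under this identification is a nonzero scalar multiple of the Maslov cocycle, and justify the pullback diagram that transports these cocycles along $\xi$. A secondary subtlety in the forward direction would be the measurable-to-continuous upgrade of boundary maps obtained directly from bounded cohomology, but this is bypassed here by appealing to the Anosov property from the start.
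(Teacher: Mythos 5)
The paper offers no proof of this statement: it is quoted verbatim from Burger--Iozzi--Wienhard, so there is nothing internal to compare against. Your outline faithfully reconstructs the strategy of that cited work --- realizing the Toledo invariant via the bounded K\"ahler class, transferring it to the Maslov cocycle on $\check{S}^3$ through the amenability of the $\pi_1(\Sigma_g)$-action on $\partial\pi_1(\Sigma_g)$, and reading off maximality from the resulting integral formula --- and both implications are sound as sketched. The only step I would flag is the ``continuity promotes a.e.\ to everywhere'' claim: this needs the additional observation that the generalized Maslov index is locally constant on the open set of pairwise transverse triples, so that the maximal level set is open and closed there; and note that invoking the Anosov property to get the continuous boundary map is legitimate here only because that property is quoted as an independently established fact (in the original source the continuous map is instead built by upgrading the measurable boundary map furnished by the bounded-cohomology machinery, and Anosov is a consequence rather than an input).
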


We refer the reader to \cite{Burger_Iozzi_Wienhard_toledo, Wienhard_mapping, Guichard_Wienhard_DoD, Burger_Pozzetti} for more details and further properties of maximal representations. 

\subsection{Positive representations}
Based on the common characterization of representations in the Hitchin components as well as maximal representations in terms of positive triples in flag varieties, we propose the following definition. 

\begin{definition}
Let $G$ be a semisimple Lie group with a $\Theta$-positive structure. 
A representation $\rho: \pi_1(\Sigma_{g})  \to G$ is said to be $\Theta$-positive if there exists a $\rho$-equivariant positive map 
$\xi: \partial \pi_1(\Sigma_{g})  = \RR\PP^1 \longrightarrow G/P_{\Theta}$ 
sending positive triples in $\RR\PP^1$ to $\Theta$-positive triples in $G/P_{\Theta}$. 
\end{definition}

We make the following Conjecture: 
\begin{conjecture}[Guichard-Labourie-Wienhard]\label{conj:GLW}
Let $\rho: \pi_1(\Sigma_{g})  \to G$ be a  $\Theta$ -positive representation. Then $\rho$ is $P_\Theta$-Anosov.  
The set of $\Theta$-positive representations $\rho: \pi_1(\Sigma_{g})  \to G$ is open and closed in $ \mathrm{Hom}(\pi_1(\Sigma_g), G)/G$.
\end{conjecture}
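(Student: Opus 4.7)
The plan is to reduce both assertions to the interplay between two facts: $\Theta$-positivity is an \emph{open} condition on transverse triples in $G/P_\Theta$ (by Theorem~\ref{cor:positive}), and positivity of triples forces enough transversality and proximality on the image of $\rho$ to invoke a standard criterion for the $P_\Theta$-Anosov property. The general template is the one used by Labourie and Fock--Goncharov for Hitchin representations \cite{Labourie_anosov, Fock_Goncharov} and by Burger--Iozzi--Labourie--Wienhard for maximal representations \cite{Burger_Iozzi_Labourie_Wienhard}; the task is to show that it extends to the two new families of $\Theta$-positive structures.

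For the Anosov property, I would start from the $\rho$-equivariant positive boundary map $\xi : \RR\PP^1 \to G/P_\Theta$. Positivity of every triple $(\xi(x),\xi(y),\xi(z))$ forces $\xi(x)$ and $\xi(z)$ to be transverse and forces $\xi$ to be continuous and injective. From $\xi$ and the positive structure one constructs a $\rho$-equivariant companion map $\xi^* : \RR\PP^1 \to G/P_\Theta^{opp}$ pointwise transverse to $\xi$; this is the analogue of the construction used in the Hitchin and Hermitian-of-tube-type cases, where the opposite map is extracted from the ordered data of positive $n$-tuples (Remark~\ref{rem:fourtuples}). The harder task is to upgrade transversality into uniform hyperbolicity: for every $\gamma \in \pi_1(\Sigma_g) \setminus \{1\}$ one needs $\rho(\gamma)$ to be $P_\Theta$-proximal with attracting and repelling fixed points $\xi(\gamma^+)$ and $\xi^*(\gamma^-)$. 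Because a primitive element $\gamma$ can be represented by a word placing $\rho(\gamma)$, after conjugation, inside $G^{>0}_\Theta$, this would follow from the proximality conjecture for $G^{>0}_\Theta$ stated earlier. One then concludes via the Guichard--Wienhard / Kapovich--Leeb--Porti characterization of Anosov representations by dynamics-preserving transverse equivariant boundary maps.

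Once the Anosov property is established, openness is essentially automatic: $P_\Theta$-Anosov representations form an open subset of $\mathrm{Hom}(\pi_1(\Sigma_g),G)/G$ on which the boundary map depends continuously in the uniform topology, and since the locus of $\Theta$-positive triples is open in $(G/P_\Theta)^3$, the perturbed boundary map of any nearby $\rho'$ still sends positive triples to positive triples. For closedness, consider a convergent sequence $\rho_n \to \rho_\infty$ of $\Theta$-positive representations with boundary maps $\xi_n$. H\"older estimates coming from the Anosov property make $\{\xi_n\}$ equicontinuous and uniformly transverse, so after extracting a subsequence one obtains a $\rho_\infty$-equivariant continuous transverse limit $\xi_\infty$. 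A priori the images of positive triples under $\xi_\infty$ lie only in the \emph{closure} of the positive triple locus, and the task is to rule out degeneration onto its boundary; one attacks this by using the uniform transversality of $\xi_\infty$, together with the fact that the positive locus is an entire connected component of the transverse triple locus, to show that the dense subset of positive triples cannot be mapped into a lower-dimensional boundary stratum.

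The main obstacle, I expect, is exactly this last step in closedness -- the analogue of the classical problem of showing that limits of discrete and faithful representations remain discrete and faithful. Ruling out collapse to the non-strictly positive regime will likely require a genuinely new input beyond formal arguments: a detailed description of the $\Theta$-nonnegative semigroup $G^{\geq 0}_\Theta$ and its stratification, and a precise understanding of how the boundary maps of $\rho_n$ interact with it, as developed in \cite{Guichard_Wienhard_pos}. A secondary but nontrivial input is the proximality conjecture for $G^{>0}_\Theta$, which should be accessible for the two new families via the explicit parametrizations in Section~\ref{sec:so3q} and its analogue for the exceptional $F_4$-type case.
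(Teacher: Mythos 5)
There is nothing in the paper to compare your proposal against: the statement you are addressing is explicitly a \emph{conjecture} (Conjecture~\ref{conj:GLW}), and the paper offers no proof, deferring it entirely to the forthcoming work \cite{Guichard_Labourie_Wienhard}. So the relevant question is whether your proposal actually closes the conjecture, and it does not. What you have written is a program, not a proof, and you say so yourself at two critical junctures. First, the Anosov half rests on the assertion that each $\rho(\gamma)$ is $P_\Theta$-proximal, which you derive from the claim that a primitive $\gamma$ can be conjugated into $G^{>0}_\Theta$ (itself unproved and nontrivial even in the Hitchin case) combined with the proximality statement for $G^{>0}_\Theta$ --- but that statement is itself only Conjecture~4.8 of this paper, so you are deducing one open conjecture from another. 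Second, for closedness you explicitly flag that ruling out degeneration of the limit boundary map onto the non-strictly-positive locus ``will likely require a genuinely new input beyond formal arguments''; that is precisely the hard core of the conjecture, and naming the difficulty is not the same as resolving it.

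To be fair, the template you lay out (positive boundary map $\Rightarrow$ transversality and dynamics-preservation $\Rightarrow$ Anosov via the Guichard--Wienhard/Kapovich--Leeb--Porti criterion; openness from openness of the positive-triple locus; closedness from equicontinuity plus a non-degeneration argument) is exactly the strategy one would expect, modeled on \cite{Labourie_anosov, Fock_Goncharov, Burger_Iozzi_Labourie_Wienhard}, and it is consistent with the authors' stated intentions. But every step that is routine in the split and Hermitian cases becomes routine there only because of detailed structural results about $U^{>0}$, $G^{\geq 0}$ and their closures that are not yet available for the $\SO(p,q)$ and $F_4$-type families --- the paper itself only sketches the construction of $U_\Theta^{>0}$ and defers the proofs to \cite{Guichard_Wienhard_pos}. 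Until those inputs (the stratification of $G^{\geq 0}_\Theta$, the proximality of $G^{>0}_\Theta$, and the conjugation-into-the-semigroup step) are established, your argument has genuine gaps at both the Anosov step and the closedness step, and the statement remains what the paper says it is: a conjecture.
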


%
This conjecture will be addressed in forthcoming work of the authors with Fran\c{c}ois Labourie \cite{Guichard_Labourie_Wienhard}. 

A positive answer to the conjecture together with Theorem~\ref{thm:classification} implies 
\begin{corollary}
There are two other families of Lie groups $G$, namely $\SO(p,q)$, $p\neq q$, and the exceptional family modelled on $F_4$, which admit higher Teichm\"uller spaces $\mathcal{T}_{\Theta}(\Sigma_g, G) \subset \mathrm{Hom}(\pi_1(\Sigma_g), G)/G$.  
\end{corollary}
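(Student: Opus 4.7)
The plan is to deduce the corollary directly from Theorem~\ref{thm:classification} combined with a positive resolution of Conjecture~\ref{conj:GLW}. Theorem~\ref{thm:classification} singles out four families of Lie groups admitting a $\Theta$-positive structure; two of these families (split real forms and Hermitian tube type) already correspond to the Hitchin and maximal components respectively, so only the remaining two cases, $\SO(p,q)$ with $p\neq q$ and the exceptional family modelled on $F_4$, need to be analyzed. For each such $G$, I set $\mathcal{T}_\Theta(\Sigma_g,G)\subset \mathrm{Hom}(\pi_1(\Sigma_g),G)/G$ to be the image of the set of $\Theta$-positive representations and aim to verify: (i) non-emptiness, (ii) $\mathcal{T}_\Theta(\Sigma_g,G)$ is a union of connected components of the character variety, and (iii) every representation in it is discrete and faithful.

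For non-emptiness I would exhibit an explicit $\Theta$-positive representation by the principal $\mathfrak{sl}_2$ recipe of Conjecture~\ref{conj:sl2}: choose an $\mathfrak{sl}_2$-triple $(e,h,f)\subset\mathfrak{g}$ whose nilpotent element decomposes as $e=\sum_{\beta\in\Theta}v_\beta$ with each $v_\beta\in c^\circ_\beta$, let $\pi:\mathrm{SL}(2,\RR)\to G$ be the associated homomorphism, fix a discrete and faithful $\iota:\pi_1(\Sigma_g)\to\mathrm{SL}(2,\RR)$, and set $\rho=\pi\circ\iota$. By Conjecture~\ref{conj:sl2} this $\rho$ admits a continuous equivariant boundary map $\RR\PP^1\to G/P_\Theta$ sending positive triples to $\Theta$-positive triples, hence is $\Theta$-positive. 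For $\SO(p,q)$ such triples can be produced by combining the explicit $\mathrm{SO}(2,1)$ computation underlying Section~\ref{sec:so3q} with the block structure that reduces $\SO(p,q)$ to $\SO(3,q)$; for the exceptional family one reads the required decomposition off the indecomposable module $\mathfrak{u}_{\beta_\Theta}$, using that the cone $c^\circ_{\beta_\Theta}$ is the cone of positive definite Hermitian $3\times 3$ matrices over $\RR,\CC,\HH,\OO$, which always contains an element of the form demanded by an $\mathfrak{sl}_2$-triple.

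Steps (ii) and (iii) are then immediate from Conjecture~\ref{conj:GLW}: the openness and closedness of the $\Theta$-positive locus shows that $\mathcal{T}_\Theta(\Sigma_g,G)$ is a non-empty union of connected components; and the $P_\Theta$-Anosov property, which is the second assertion of the conjecture, implies in particular discreteness and faithfulness (using the standard fact that Anosov representations are discrete with finite kernel, see \cite{Labourie_anosov, Guichard_Wienhard_DoD}, together with the usual argument that in the Zariski closure the kernel is central, hence trivial for torsion-free surface groups). The connected components so obtained therefore share with Teichm\"uller space and with the known higher Teichm\"uller spaces the defining feature of consisting entirely of discrete and faithful representations, and so deserve the name $\mathcal{T}_\Theta(\Sigma_g,G)$.

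The main obstacle is clearly Conjecture~\ref{conj:GLW} itself, whose proof is deferred to \cite{Guichard_Labourie_Wienhard} and which requires establishing the Anosov property together with a genuine openness-and-closedness argument for $\Theta$-positivity; this is expected to generalize the techniques used by Labourie, Fock--Goncharov, and Burger--Iozzi--Labourie--Wienhard in the Hitchin and maximal cases, but new ingredients are needed because $\Theta$-positivity for the two new families involves non-abelian unipotent subgroups $U_\beta$ with non-trivial convex cones. A secondary technical point, still non-trivial, is the verification of non-emptiness via the $\mathfrak{sl}_2$-triple construction in the exceptional family, which requires Conjecture~\ref{conj:sl2} and a case-by-case analysis inside the Freudenthal algebra of Hermitian $3\times 3$ matrices over the four normed division algebras.
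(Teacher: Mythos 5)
Your proposal takes essentially the same route as the paper, which gives no separate argument for this corollary beyond asserting that a positive answer to Conjecture~\ref{conj:GLW} together with Theorem~\ref{thm:classification} implies it; your deduction of (ii) and (iii) from the conjecture and the classification is exactly what the authors intend. Your additional attention to non-emptiness via the principal $\mathfrak{sl}_2$ construction and Conjecture~\ref{conj:sl2} is a point the paper leaves implicit, and it is a correct and worthwhile supplement rather than a departure from the paper's approach.
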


\subsection{Connected components of the representation variety} 
When $G$ is  a compact or a complex simple Lie group, then the connected components of $ \mathrm{Hom}(\pi_1(\Sigma_g), G)/G$ are in one to one correspondence with elements in $\pi_1(G)$, \cite{Atiyah_Bott, Goldman_conj, Li}. This is not true when $G$ is a real simple Lie group. In this case there can be additional connected components which are not distinguished by characteristic classes. 
This phenomenon happens in particular in the case when $G$ is a split real Lie group, and when $G$ is of Hermitian type. 
In the first case, the Hitchin components give rise to additional connected components. 
In the second case, the space of maximal representations, which is given as a level set of a characteristic class in $\mathrm{H}^2(\Sigma_g, \pi_1(G)) \cong \pi_1(G)$, splits into several connected components. These additional components can actually distinguished by additional topological invariants, which have been defined one the one hand using the theory of Higgs bundles in \cite{Bradlow_GP_Gothen_classical, Gothen, GP_Gothen_Mundet}, and using geometric consequences of the Anosov property of maximal representations in \cite{Guichard_Wienhard_top}.

Based on Conjecture~\ref{conj:GLW} we make the following conjecture 

\begin{conjecture}(Guichard-Wienhard)\label{conj:conncomp}
When $G$ carries a $\Theta$-positive structure, then there are additional connected components in $ \mathrm{Hom}(\pi_1(\Sigma_g), G)/G$, which are not distinguished by characteristic classes. 
\end{conjecture}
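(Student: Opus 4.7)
The plan is to deduce Conjecture~\ref{conj:conncomp} from Conjecture~\ref{conj:GLW} combined with a Fuchsian construction and a comparison with the known Higgs-bundle counts of components. First I would define $\mathcal{T}_{\Theta}(\Sigma_g, G) \subset \mathrm{Hom}(\pi_1(\Sigma_g), G)/G$ to be the set of $\Theta$-positive representations; by Conjecture~\ref{conj:GLW} this subset is simultaneously open and closed, so it is a (nonempty, once we have exhibited a point) disjoint union of connected components of the representation variety. This reduces the conjecture to two tasks: exhibiting at least one $\Theta$-positive representation, and showing that the resulting components cannot all be separated from their complement by characteristic classes.

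For existence, I would construct a Fuchsian-type locus inside $\mathcal{T}_{\Theta}(\Sigma_g, G)$. By Theorem~\ref{thm:classification}, in each of the four families we have explicit data, and in particular one can produce a three-dimensional simple subalgebra $\mathfrak{g}_0 = \langle e,f,h \rangle \subset \mathfrak{g}$ whose nilpotent element decomposes as $e = \sum_{\beta \in \Theta} v_\beta$ with $v_\beta \in c^\circ_\beta$; this is the content and input of Conjecture~\ref{conj:sl2}. Composing any discrete faithful embedding $\iota : \pi_1(\Sigma_g) \to \mathrm{PSL}(2,\RR)$ with the associated homomorphism $\mathrm{PSL}(2,\RR) \to G$ and pulling back the equivariant boundary map furnished by Conjecture~\ref{conj:sl2}, we obtain a $\Theta$-positive representation, showing that $\mathcal{T}_{\Theta}(\Sigma_g, G)$ is nonempty and contains a copy of Teichm\"uller space.

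For the new-components statement, I would apply Theorem~\ref{thm:classification}: the split and Hermitian tube-type cases are already known (Hitchin components and maximal components respectively produce extra components not detected by $\pi_0(G)$ and $\pi_1(G)$, cf.\ \cite{Hitchin, Bradlow_GP_Gothen_classical, Gothen, GP_Gothen_Mundet, Guichard_Wienhard_top}), so only the $\mathrm{SO}(p,q)$ and exceptional $F_4$-type families need new work. For $\mathrm{SO}(p,q)$ with $p<q$, the comparison to characteristic classes is made by computing the total number of connected components of $G$-Higgs bundles on $\Sigma_g$ (building on the analysis of $\mathrm{SO}(p,q)$-Higgs bundles by Aparicio-Arroyo, Bradlow, Collier, Garc\'\i{}a-Prada, Gothen, and Oliveira) and comparing with the characteristic-class invariants valued in $\mathrm{H}^1(\Sigma_g, \pi_0(G)) \oplus \mathrm{H}^2(\Sigma_g, \pi_1(G))$; the excess components should coincide with the components of $\mathcal{T}_\Theta(\Sigma_g, G)$ detected by $\Theta$-positivity. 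The exceptional family would be handled analogously using the corresponding Higgs-bundle moduli spaces.

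The main obstacle is precisely the last step: showing that the components arising from $\Theta$-positivity cannot be separated from other components by any of the a priori defined topological invariants. This requires matching the Higgs-bundle count of components in each fixed level set of every known characteristic class with the topological enumeration predicted by the positivity structure; in practice one must produce at least one component of non-positive representations sharing all characteristic classes with a positive one, using explicit deformation arguments in the Higgs bundle moduli space or direct constructions of nearby representations that fail the $\Theta$-positivity condition. The exceptional family is harder still, as the Higgs-bundle technology in those ranks is less developed and the combinatorics of $F_4$-type restricted root systems complicates the enumeration of invariants.
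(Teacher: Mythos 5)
This statement is a conjecture: the paper offers no proof of it, only a strategy (it is to be addressed in \cite{Guichard_Labourie_Wienhard}, with partial confirmation for $\SO(n,n+1)$ in \cite{Collier_thesis}). Your proposal is therefore best judged as a proof \emph{strategy}, and as such it tracks the paper's own outline quite closely: use Conjecture~\ref{conj:GLW} to see that $\mathcal{T}_\Theta(\Sigma_g,G)$ is open and closed, exhibit explicit $\Theta$-positive representations through a Fuchsian locus, and then compare against characteristic classes (the paper proposes the topological invariants of Anosov representations from \cite{Guichard_Wienhard_top} where you propose Higgs-bundle component counts; both routes are consistent with what the authors indicate).

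The genuine gap is that nothing in your argument is actually proved. First, the openness/closedness and the $P_\Theta$-Anosov property are exactly Conjecture~\ref{conj:GLW}, which is open; and your existence step leans on Conjecture~\ref{conj:sl2}, also open, to guarantee that the Fuchsian representations built from a principal-type $\mathfrak{sl}_2$ with $e=\sum_{\beta\in\Theta}v_\beta$, $v_\beta\in c^\circ_\beta$, admit positive boundary maps. Second, and more seriously, the decisive step --- showing that at least one $\Theta$-positive component shares all a priori characteristic classes with a component of non-positive representations --- is precisely the content of the conjecture, and you do not carry it out: you write that the excess components ``should coincide'' with the positive ones and yourself flag this as ``the main obstacle.'' Without an actual computation (e.g.\ of the invariants of \cite{Guichard_Wienhard_top} on the positive locus, or a completed Higgs-bundle count for $\SO(p,q)$ and the $F_4$-family together with an exhibited non-positive representation having the same invariants), the argument is a restatement of the problem rather than a solution. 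So the proposal is a reasonable roadmap, aligned with the authors' intentions, but it is not a proof and does not close the conjecture.
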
 

In fact if Conjecture~\ref{conj:GLW} holds, then the construction of additional  topological invariants to Anosov representations from \cite{Guichard_Wienhard_top} can be used for $\Theta$-positive representations. These invariants together with the construction of explicit examples of $\Theta$-positive representations will then allow to give a lower bound on the number of connected components 
of $\Theta$-positive representations. 

In some special cases, when $G = \SO(n,n+1)$ Conjecture~\ref{conj:conncomp} has been partly confirmed in recent work of Collier \cite{Collier_thesis} and in ongoing work of Bradlow, Collier, Garcia-Prada, Gothen, and Oliveira using methods from the theory of Higgs bundles. 

%


%

\frenchspacing
\newcommand{\MR}[1]{}


\providecommand{\bysame}{\leavevmode\hbox to3em{\hrulefill}\thinspace}

\end{document}